\newcommand{\newsection}[1]{\setcounter{equation}{0} \section{#1}}
\newtheorem{theorem}{Theorem}[section]
\newtheorem{lemma}[theorem]{{\bf Lemma}}
\newtheorem{coro}[theorem]{{\bf Corollary}}
\newtheorem{remark}[theorem]{Remark}
\newtheorem{question}{Question}
\def \D{\mathbb{D}}
\newcommand{\clb}{\mathcal{B}}
\newcommand{\cle}{\mathcal{E}}
\newcommand{\clh}{\mathcal{H}}
\newcommand{\clw}{\mathcal{W}}
\newcommand{\clk}{\mathcal{K}}
\newcommand{\cls}{\mathcal{S}}
\newcommand{\clf}{\mathcal{F}}
\newcommand{\C}{\mbox{$\mathbb C$}}     
\newcommand{\ran}{\mbox{ran}}
\begin{document}

\title[Pairs of projections and isometries]
{Pairs of projections and commuting isometries}

\author[De]{Sandipan De}
\address{Sandipan De, School of Mathematics and Computer Science, Indian Institute of Technology Goa,
	Farmagudi, Ponda-403401, Goa, India.}
\email{sandipan@iitgoa.ac.in, 444sandipan@gmail.com}

\author[Sarkar]{Jaydeb Sarkar}
\address{Jaydeb Sarkar, Indian Statistical Institute,
Statistics and Mathematics Unit, 8th Mile, Mysore Road,
Bangalore, 560 059, India.}
\email{jaydeb@gmail.com, jay@isibang.ac.in}

\author[Shankar]{P Shankar}
\address{Shankar. P, Department of Mathematics, Cochin University of Science and Technology, Kochi 682022, Kerala, India.}
\email{shankarsupy@cusat.ac.in, shankarsupy@gmail.com}

\author[Sankar]{Sankar T.R.}
\address{Sankar T.R., Department of Mathematics, Indian Institute of Technology Bombay, Powai, Mumbai, 400076,
India}
\email{sankartr90@gmail.com}

\subjclass[2010]{47A13, 47A65, 47B47, 15A15, 47B35, 47B07}
\keywords{Shift operators, isometries, projections, weighted shifts, weighted shift matrices, Toeplitz operators, compact operators.}

\begin{abstract}
It is known that the non-zero part of compact defect operators of Berger-Coburn-Lebow pairs (BCL pairs in short) of isometries are diagonal operators of the form
\[
\begin{bmatrix}
I_1 & & & \\
& D & & \\
& & - I_2 & \\
& & & - D \\
\end{bmatrix},
\]
where $I_1$ and $I_2$ are the identity operators and $D$ is a positive contractive diagonal operator. We discuss the question of constructing an irreducible BCL pair from a diagonal operator of the above type. The answer to this question is sometimes in the affirmative and sometimes in the negative. This also answers a part of the question raised by He, Qin, and Yang. Our explicit constructions of BCL pairs yield concrete examples of pairs of commuting isometries.
\end{abstract}

\maketitle

\tableofcontents

\newsection{Introduction}

It is well known that the general theory of pairs of commuting isometries is complicated and the inadequacy of concrete representations of pairs of commuting isometries is a challenging obstacle to the comprehension of multivariable operator theory. In this paper, we focus on the Berger, Coburn, and Lebow pairs of commuting isometries \cite{BCL75} from the perspective of pairs of orthogonal projections (or in short projections) \cite{Davis, AP, DJS2016}, defect operators of commuting tuples of bounded linear operators \cite{Guo}, and a question of He, Qin, and Yang \cite{HQY2015}.

Isometries and projections are connected via the well known notion of defect operators \cite{Nagy book}. Let $V$ be a bounded linear operator on a Hilbert space $\clh$ ($V \in \clb(\clh)$ in short, and all Hilbert spaces are assumed to be separable and over $\C$). The \textit{defect operator} of $V$ is the linear operator $I - V V^*$. If $V$ is an isometry, then it is easy to see that
\[
I - V V^* = P_{\clw},
\]
the orthogonal projection onto the \textit{wandering subspace} $\clw := \ker V^*$. If, in addition, $V$ is a \textit{shift}, that is
\[
\bigcap_{n=0}^\infty V^n \clh = \{0\},
\]
then $V$ is unitarily equivalent to $M_z$ on $H^2_{\clw}(\D)$, where $M_z$ is the operator of multiplication by the coordinate function $z$, and $H^2_{\clw}(\D)$ denotes the $\clw$-valued Hardy space over the open unit disk $\mathbb{D} = \{z \in \C: |z| < 1\}$. Note that dimension of the wandering subspaces (or rank of defect operators) is the only unitary invariant of shift operators. Then the classical von Neumann Wold decomposition theorem \cite[Page 3, Theorem 1.1]{Nagy book} completely classifies the structure of isometries: An isometry is simply a shift or a unitary or a direct sum of a shift and a unitary. Since the structure of unitary operators is completely clear, the defect operator (or the wandering subspace) plays a crucial role in the classification of isometries.

Now we turn to pairs of commuting isometries. Unlike the case of isometries, the general structure and tractable invariants of pairs of commuting isometries are largely unknown (cf. \cite{Yang-S}). However, we still have a suitable notion of defect operator for tuples of isometries, which encodes a greater amount of information about operators \cite{Guo}. The \textit{defect operator} of a pair of commuting isometries $(V_1, V_2)$ is defined by
\[
C(V_1, V_2) = I - V_1 V_1^* - V_2 V_2^* + V_1 V_2 V_1^* V_2^*.
\]
In one hand, this notion has some resemblance to defect operators of single isometries, but on the other hand, the defect operator of a general pair of commuting isometries is fairly complex and difficult to analyze. However, the situation is somewhat favorable in the case of  Berger, Coburn, and Lebow pairs (BCL pairs in short): A commuting pair of isometries $(V_1, V_2)$ is said to be a \textit{BCL pair} if $V_1 V_2$ is a shift.

The main novelty in the definition of BCL pairs is the shift part, which brings analytic flavor to pairs of commuting isometries. Let $\cle$ be a Hilbert space, $U \in \clb(\cle)$ a unitary, and let $P \in \clb(\cle)$ be a projection. We call the ordered triple $(\mathcal{E}, U, P)$ a \textit{BCL triple}. Given a BCL triple $(\cle, U, P)$, we consider the pair of Toeplitz operators $(M_{\Phi_1}, M_{\Phi_2})$ on $H^2_{\cle}(\D)$ with analytic symbols
\begin{equation}\label{eqn: Phi analytic}
\Phi_1(z) = (P + z P^\perp)U^*, \text{ and } \Phi_2(z) = U(P^\perp + zP) \qquad (z \in \D),
\end{equation}
where $P^{\perp} :=I - P$. It is easy to see that
\[
M_{\Phi_1} M_{\Phi_2} = M_{\Phi_2} M_{\Phi_1} = M_z,
\]
and hence $(M_{\Phi_1}, M_{\Phi_2})$ on $H^2_{\cle}(\D)$ is a BCL pair. And, this is precisely the analytic model of BCL pairs \cite{BCL1978}: Up to joint unitary equivalence, BCL pairs are of the form $(M_{\Phi_1}, M_{\Phi_2})$ on $H^2_{\cle}(\D)$ for BCL triples $(\mathcal{E}, U, P)$.

\begin{remark}\label{remark-agreement}
In view of the above analytic model, throughout this paper, we will use BCL pair $(V_1, V_2)$ on $\clh$, BCL pair $(M_{\Phi_1}, M_{\Phi_2})$ on $H^2_{\cle}(\D)$ with $\Phi_1$ and $\Phi_2$ as in \eqref{eqn: Phi analytic}, and the associated BCL triple $(\cle, U, P)$ interchangeably.
\end{remark}

Returning to defect operators, for the BCL pair $(V_1, V_2)=(M_{\Phi_1}, M_{\Phi_2})$ defined as in \eqref{eqn: Phi analytic}, one finds that
\[
C(M_{\Phi_1}, M_{\Phi_2}) = \left[ \begin{array}{cc}
UPU^* - P & 0 \\
0 &0\\
\end{array}
\right],
\]
on $H^2_{\cle}(\D) = \cle \oplus z H^2_{\cle}(\D)$ \cite[Page 5]{HQY2015}, so that $z H^2_{\cle}(\D) \subseteq \ker C(M_{\Phi_1}, M_{\Phi_2})$. In particular, it suffices to study $C(M_{\Phi_1}, M_{\Phi_2})$ only on $\cle$. This and the above remark, then motivate us to define the \textit{defect operator} of the BCL triple $(\cle, U, P)$ as
\begin{equation}\label{eqn: def C}
C := C(M_{\Phi_1}, M_{\Phi_2})|_{\cle} = UPU^* - P.
\end{equation}
We shall reserve the symbol $C$ exclusively for defect operators associated to BCL triples. Clearly, $(UPU^*, P)$ is a pair of projections on $\cle$. Therefore, being a difference of a pair of projections, $C$ is a self-adjoint contraction (see \cite{AZ, AS, Andruchow1,ASS} for the general theory of pairs of projections). A natural question therefore arises: Does the difference of a pair of projections on some Hilbert space $\cle$ determine a BCL pair $(M_{\Phi_1}, M_{\Phi_2})$ on $H^2_{\cle}(\D)$? Evidently, in this generality, this problem is less accessible and a resolution seems to be despairing.

At this point, we return to the above setting and observe, in addition, that if $C$ is compact, then $C|_{(\ker C)^\perp}$ is unitarily equivalent to a special diagonal operator: A compact diagonal operator $T$ on a Hilbert space is said to be a \textit{distinguished diagonal operator} if
\begin{equation}\label{eq-intro defect}
T= \begin{bmatrix}
I_1 & & & \\
& D & & \\
& & - I_2 & \\
& & & - D \\
\end{bmatrix},
\end{equation}
where $I_1$ and $I_2$ are the identity operators and $D$ is a positive contractive diagonal operator. It is important to note that, up to unitary equivalence, a distinguished diagonal operator always can be represented as a difference of two projections (see Theorem \ref{struc}). Then, in view of present terminology, Theorem 4.3 of \cite{HQY2015}, which is also the entry point of this paper, states:

\begin{theorem}[He, Qin and Yang]\label{structure}
Let $(M_{\Phi_1}, M_{\Phi_2})$ be a BCL pair. If $C(M_{\Phi_1}, M_{\Phi_2})$ is compact, then its non-zero part is unitarily equivalent to a distinguished diagonal operator.
\end{theorem}

The goal of this paper, largely, is to suggest the missing link between distinguished diagonal operators and BCL pairs. More specifically, given a distinguished diagonal operator $T \in \clb(\cle)$, we are interested in constructing BCL pairs $(M_{\Phi_1}, M_{\Phi_2})$ on $H^2_{\cle}(\D)$ such that the non-zero part of $C(M_{\Phi_1}, M_{\Phi_2})$ is equal to $T$. However, in order to avoid trivial situation (cf. \cite[Theorem 6.7]{HQY2015}), we need to impose the irreducibility condition on the pairs: A pair of bounded linear operators on a Hilbert space is said to be \textit{irreducible} if the only closed subspaces that reduce both the operators are the trivial ones.

We will see in Corollary \ref{cor-irred}, a BCL pair $(M_{\Phi_1}, M_{\Phi_2})$ on $H^2_{\cle}(\D)$ is irreducible if and only if $(\cle, U, P)$ is irreducible (that is, the pair $(U, P)$ on $\cle$ is irreducible). Therefore, irreducibility is compatible with BCL pairs and BCL triples. The following is the central question of this paper.

\begin{question}\label{Q1}
Let $T \in \clb(\cle)$ be a distinguished diagonal operator. Does there exist an irreducible BCL pair $(M_{\Phi_1}, M_{\Phi_2})$ on $H^2_{\cle}(\D)$ such that $C(M_{\Phi_1}, M_{\Phi_2})|_{\cle} = T$? Or, equivalently, does there exist an irreducible BCL triple $(\cle, U, P)$ such that $U P U^* - P = T$
\end{question}

It is worth noting that the injectivity of $T$ and the condition that $C(M_{\Phi_1}, M_{\Phi_2})|_{\cle} = T$ forces that
\[
(\ker C(M_{\Phi_1}, M_{\Phi_2}))^\perp = \cle.
\]

The above question also relates to an unresolved question raised by He, Qin, and Yang in \cite[page 18]{HQY2015}, which asks: Given a distinguished diagonal operator $T$, does there exist an irreducible BCL pair on some Hilbert space such that the non-zero part of the corresponding defect operator is unitarily equivalent to $T$? From this perspective, Question \ref{Q1} seeks for the irreducible BCL pair $(M_{\Phi_1}, M_{\Phi_2})$ with an additional property that $(\ker C(M_{\Phi_1}, M_{\Phi_2}))^\perp = \cle$. Evidently, an affirmative answer to Question \ref{Q1} would imply an affirmative answer to He, Qin, and Yang question.

We prove that the answer to Question \ref{Q1} is sometimes in the affirmative and sometimes in the negative. In order to be more precise, we proceed to elaborate on the spectral decomposition of defect operators. For $X \in \clb(\clh)$, we denote $\sigma(X)$ the spectrum of $X$, and for $\mu \in \mathbb{C}$, we denote
\[
E_{\mu}(X) = \ker(X - \mu I_{\clh}).
\]
Note again that the defect operator $C (= C(M_{\Phi_1}, M_{\Phi_2})|_{\cle})$ is a self-adjoint contraction. In addition, if $C$ is compact, then for each non-zero $\lambda \in \sigma(C) \cap (-1, 1)$, $-\lambda$ is also in $\sigma(C)$, and (see \cite[Lemma 4.2]{HQY2015})
\begin{equation}\label{eqn - plus minus lambda}
k_{\lambda} := \mbox{dim} E_{\lambda}(C) = \mbox{dim} E_{-\lambda}(C).
\end{equation}
Consequently, one can decompose $(\ker C)^{\perp}$ as
\[
(\ker C)^{\perp} = E_1(C) \oplus (\oplus_{\lambda} E_{\lambda}(C)) \oplus E_{-1}(C)
\oplus (\oplus_{\lambda} E_{-\lambda}(C)),
\]
where $\lambda$ runs over the set $\sigma(C) \cap (0, 1)$. Then
\[
C|_{(\ker C)^{\perp}} =
\begin{bmatrix}
I_{E_1} & & & \\
& \oplus_{\lambda} \lambda I_{E_{\lambda}} & & \\
& & -I_{E_{-1}} & \\
& & & \oplus_{\lambda} (-\lambda) I_{E_{-\lambda}} \\
\end{bmatrix},
\]
and hence $C|_{(\ker C)^{\perp}}$ is unitarily equivalent to a distinguished diagonal operator. More specifically
\begin{equation*}
[C|_{(\ker C)^{\perp}}] \cong
\begin{bmatrix}
I_{l_1} & & & \\
& D & & \\
& & -I_{l_1^{\prime}} & \\
& & & -D \\
\end{bmatrix},
\end{equation*}
where $l_1 = \mbox{dim} E_1(C)$, $l_1^{\prime} = \mbox{dim} E_{-1}(C)$, $D = \bigoplus_{\lambda} \lambda I_{k_{\lambda}}$, and for $m \in \mathbb{N}$, $I_m$ denotes the $m \times m$ identity matrix.

We are now ready to explain the main contribution of this paper. In Theorem \ref{2.4}, we prove a noteworthy property for finite-dimensional Hilbert spaces: Let $\cle$ be a finite-dimensional Hilbert space and let $(\cle, U, P)$ be a BCL triple. Then
\[
\dim E_1(C) = \dim E_{-1}(C).
\]
Corollary \ref{equ} then states that if $T \in \clb(\cle)$ is a distinguished diagonal operator and
\[
\mbox{dim} E_1(T) \neq \mbox{dim} E_{-1}(T),
\]
then it is not possible to find any (reducible or irreducible) BCL pair on $H^2_{\cle}(\mathbb{D})$ such that the non-zero part of the defect operator is unitarily equivalent to $T$. Therefore, the answer to Question \ref{Q1} is negative in this case. These results are the main content of Section \ref{section-basic results}.

In Section \ref{section-finite dim}, we initiate our investigation in search of an affirmative answer to Question \ref{Q1}. Here we deal with distinguished diagonal operators on finite-dimensional Hilbert spaces with at least two distinct positive eigenvalues. In the following section, Section \ref{section-finite dim 2}, we settle the remaining case, that is, distinguished diagonal operators with only one positive eigenvalue. The results of Section \ref{section-finite dim} and Section \ref{section-finite dim 2} summarize as follows (see Theorem \ref{main}): Let $\cle$ be a finite-dimensional Hilbert space, $T \in \clb(\mathcal{E})$ be a distinguished diagonal operator, and suppose
\[
\mbox{dim} E_1(T) = \mbox{dim} E_{-1}(T).
\]
If $T$ has either at least two distinct positive eigenvalues or, only one positive eigenvalue lying in $(0, 1)$, then there exists an irreducible BCL pair $(M_{\Phi_1}, M_{\Phi_2})$ on $H^2_{\cle}(\mathbb{D})$ such that $C(M_{\Phi_1}, M_{\Phi_2})|_{\cle} = T$. On the other extreme, suppose $1$ is the only positive eigenvalue of $T$. If
\[
\mbox{dim} E_1(T)= 1,
\]
then there exists an irreducible BCL pair $(M_{\Phi_1}, M_{\Phi_2})$ on $H^2_{\cle}(\mathbb{D})$ such that $C(M_{\Phi_1}, M_{\Phi_2})|_{\cle} = T$, and if
\[
\mbox{dim} E_1(T)> 1,
\]
then such an irreducible BCL pair does not exist.

\noindent Therefore, the results of Sections \ref{section-basic results}, \ref{section-finite dim}, and \ref{section-finite dim 2} completely settle Question \ref{Q1} in the case when $\cle$ is a finite-dimensional Hilbert space (also see the paragraph preceding Theorem \ref{main}).

Finally, in Section \ref{section-infinite dim} we deal with the case when $\mathcal{E}$ is infinite-dimensional. We prove that Question \ref{Q1} has an affirmative answer for the case when (see Theorem \ref{inf})
\[
\mbox{dim} E_1(T) = \mbox{dim} E_{-1}(T),
\]
as well as when (see Theorem \ref{diff1})
\[
\mbox{dim} E_1(T) = \mbox{dim} E_{-1}(T) \pm 1.
\]
Therefore, Question \ref{Q1} remains open for the remaining cases: $\cle$ is an infinite-dimensional Hilbert space, and $T \in \clb(\cle)$ a distinguished diagonal operator for which
\[
| \mbox{dim} E_1(T) - \mbox{dim} E_{-1}(T) | \geq 2.
\]

\newsection{Preparatory results}\label{sec: Preparatory}

In this section, we introduce some standard notations and prove some basic results that will be frequently used in the main body of the paper.

Recall, in view of \eqref{eqn: Phi analytic}, up to unitary equivalence, a BCL pair $(V_1, V_2)$ admits the analytic  representation $(V_1, V_2) = (M_{\Phi_1}, M_{\Phi_2})$, where
\begin{equation}\label{def}
\begin{aligned}
M_{\Phi_1} = (P + M_z P^\perp)U^*, \text{ and } M_{\Phi_2} = U(P^\perp + M_z P),
\end{aligned}
\end{equation}
for some BCL triple $(\cle, U, P)$ (also see Remark \ref{remark-agreement}). In particular, $V_1 V_2 = M_{\Phi_1} M_{\Phi_2} = M_z$.

The following lemma characterizes joint reducing subspaces of BCL pairs via joint reducing subspaces of BCL triples and vice versa.

\begin{lemma}\label{irr}
Let $(\cle, U, P)$ be a BCL triple, and let $\mathcal{S} \subseteq {H}^2_{\mathcal{E}}(\D)$ be a closed subspace. Then $\mathcal{S}$ reduces $(M_{\Phi_1}, M_{\Phi_2})$ if and only if there exists a closed reducing subspace $\tilde{\mathcal{E}} \subseteq \mathcal{E}$ for $( U, P)$ such that $\mathcal{S} = H^2_{\tilde{\mathcal{E}}}(\D)$.
\end{lemma}
\begin{proof}
If $\mathcal{S}$ reduces $(M_{\Phi_1}, M_{\Phi_2})$, then $\cls$ reduces $M_z$ (as $M_{\Phi_1} M_{\Phi_2} = M_z$), and hence there exists a closed subspace $\tilde{\mathcal{E}} \subseteq \mathcal{E}$ such that $\cls = {H}^2_{\tilde{\mathcal{E}}}(\D)$ \cite[Page 4, Corollary C]{Rovnyak}. It remains to show that $\tilde{\mathcal{E}}$  reduces $(U, P)$. Let $\eta \in \tilde{\mathcal{E}}$. By \eqref{def}, we know that
\[
M_{\Phi_1} \eta = PU^* \eta + (P^{\bot}U^* \eta) z,
\]
is a one-degree polynomial in $H^2_{\tilde{\cle}}(\D)$. So we conclude that
\[
U^* \eta = PU^* \eta + P^{\perp}U^* \eta \in \tilde{\mathcal{E}} \qquad (\eta \in \tilde{\cle}).
\]
Therefore, $PU^*$, $P^{\perp}U^*$ and (hence) $U^*$ leave $\tilde{\mathcal{E}}$ invariant. Similarly, using
\[
M_{\Phi_2} \eta = UP^{\bot}\eta + (UP \eta)z \in H^2_{\tilde{\cle}}(\D),
\]
we conclude that $UP^{\bot}$ and $UP$ leave $\tilde{\cle}$ invariant. Then $U (= UP^\perp + UP)$ leaves $\tilde{\mathcal{E}}$ invariant, and hence $\tilde{\cle}$ reduces $U$. Finally, $PU^* \tilde{\cle} \subseteq \tilde{\cle}$, $UP \tilde{\cle} \subseteq \tilde{\cle}$, and
\[
P  = (PU^*)(UP),
\]
imply that $\tilde{\cle}$ reduces $P$. The converse simply follows from the representations in \eqref{def}.
\end{proof}

Recall that a BCL triple $(\cle, U,P)$ is said to be \textit{irreducible} if the pair $(U,P)$ on $\cle$ is irreducible. The following is now straightforward:

\begin{coro}\label{cor-irred}
$(M_{\Phi_1}, M_{\Phi_2})$ on $H^2_{\cle}(\D)$ is irreducible if and only if $(\cle, U,P)$ is irreducible.
\end{coro}

For convenience in what follows, we introduce another layer of notation: The set of all ordered orthonormal bases of a Hilbert space $\mathcal{H}$ will be denoted by $B_{\clh}$. For instance, if $\{e_j : j \in \mathbb{Z}\}$ is an orthonormal basis of $l^2(\mathbb{Z})$, then we simply write
\[
\{e_j : j \in \mathbb{Z}\} \in B_{l^2(\mathbb{Z})}.
\]

Weighted shifts and weighted shift matrices will be core objects in our analysis. Let $\{\lambda_j\}_{j\in \mathbb{Z}}$ be a sequence of non-zero scalars. An operator $S \in \clb(\clh)$ is called a \textit{weighted shift} \cite{Shields} with weight sequence $\{\lambda_j\}_{j\in \mathbb{Z}}$ if there exists $\{e_j : j \in \mathbb{Z}\} \in B_{\mathcal{H}}$ such that
\begin{align*}
S e_i = \lambda_i e_{i+1} \quad \quad (i \in \mathbb{Z}).
\end{align*}
For $x \in \clh$, we say that $x$ is a star-cyclic vector for $S$ if
\[
\overline{\text{span}}\{S^m x, S^{*m} x: m \geq 0 \} = \clh.
\]
The finite-dimensional counterpart of weighted shifts is the so called weighted shift matrix \cite{Yu}: If $\{e_j: 1 \leq j \leq n\} \in B_{\mathcal{H}}$, then
\[
S e_j =
\begin{cases} \lambda_i e_{j+1} & \mbox{if}~ 1 \leq j < n
\\
\lambda_n e_1 & \mbox{if}~ j=n, \end{cases}
\]
is called a \textit{weighted shift matrix} with weight $\{\lambda_j\}_{j=1}^n$. For notational simplicity we denote the matrix of $S$ with respect to the ordered basis $\{e_j\}_{j=1}^n \in B_{\clh}$ as $[\lambda_n; \lambda_1, \ldots, \lambda_{n-1}]$, that is
\begin{equation}\label{eqn:weighted matrix 1}
[S] = [\lambda_n; \lambda_1, \ldots, \lambda_{n-1}] = \begin{bmatrix}
0 & 0 & \cdots & 0 & \lambda_n
\\
\lambda_1 & 0 & \cdots & 0 & 0
\\
0 & \lambda_2  & \cdots & 0 & 0
\\
\vdots & \vdots & \ddots & \vdots & \vdots
\\
0 & 0 & \cdots & \lambda_{n-1} & 0
\end{bmatrix}.
\end{equation}
Moreover, if $\lambda_j = \lambda$ for all $j=1, \ldots, n-1$, then we simply write the above as $[\lambda_n; J_{n-1}(\lambda)] $, that is
\[
[\lambda_n; J_{n-1}(\lambda)] = \begin{bmatrix}
0 & 0 & \cdots & 0 & \lambda_n
\\
\lambda & 0 & \cdots & 0 & 0
\\
0 & \lambda  & \cdots & 0 & 0
\\
\vdots & \vdots & \ddots & \vdots & \vdots
\\
0 & 0 & \cdots & \lambda & 0
\end{bmatrix}.
\]
Finally, we introduce the following general notation. Consider the weighted shift matrix $[\lambda_n; \lambda_1, \ldots, \lambda_{n-1}]$ corresponding to the weights $\{\lambda_i\}_{i=1}^n$. Suppose
\[
\{\lambda_{i_1}, \lambda_{i_2}, \ldots, \lambda_{i_m}: 1 = i_1 < i_2 < \cdots < i_m = {n-1}\},
\]
be the set of distinct elements of $\{\lambda_1, \ldots, \lambda_{n-1}\}$ and suppose $\lambda_{i_t}$ occurs $k_t$-times, $t = 1, \ldots, m$. Then we write the corresponding weighted shift matrix as
\begin{equation}\label{eqn:weighted matrix 2}
[\lambda_n; \lambda_1, \ldots, \lambda_{n-1}] = [\lambda_n; J_{k_1}(\lambda_{i_1}), \ldots, J_{k_m}(\lambda_{i_{m}})].
\end{equation}
The above elaboration of notation of weighted shift matrices will turn out to be helpful in the computation part of this paper. Also, the following elementary property of weighted shift matrices will be used repeatedly.

\begin{lemma}\label{lemma: weighted matrix - diagonal}
Let $S$ be an $n \times n$ weighted shift matrix, and let $1 \leq j \leq n$. Then
\[
S^j = \begin{bmatrix}
0 & D_{j} \\
D_{n-j} & 0
\end{bmatrix},
\]
where $D_j$ and $D_{n-j}$ are respectively $j \times j$ and $(n-j) \times (n-j)$ diagonal matrices with non-zero entries on the diagonals.
\end{lemma}
\begin{proof}
The proof follows by a straightforward induction
\end{proof}

We also need the star-cyclicity and the cyclicity property of weighted shifts and weighted shift matrices, respectively, in what follows. The result may be well known but we shall give the proof for the sake of completeness.

\begin{lemma}\label{shift}
(i) Let $S$ be a weighted shift matrix corresponding to $\{e_i\}_{i=1}^n \in B_{\clh}$. Then $e_j$ is a cyclic vector of $S$ for all $j \in \{1, \ldots, n\}$. (ii) If $S$ is a weighted shift corresponding to $\{e_i\}_{i \in \mathbb{Z}} \in B_{\clh}$, then $e_j$ is a star-cyclic vector of $S$ for all $j \in \mathbb{Z}$.
\end{lemma}
\begin{proof}
Suppose $S$ is a weighted shift matrix on $\clh$ corresponding to $\{e_i\}_{i=1}^n \in B_{\clh}$. Fix $j \in \{1, \ldots, n\}$. By repeated application of \eqref{eqn:weighted matrix 1}, one can easily see that
\[
S^p e_j = \begin{cases}
\alpha_{j+p} e_{j+p} & \text{ if } p=1, \ldots, n-j \\
\alpha_{p+j-n} e_{p+j-n} & \text{ if } p = n-j+1, \ldots, n,
\end{cases}
\]
where $\alpha_i$'s are non-zero scalar. In particular,
\[
\text{span} \{S^p e_j: p=1, \ldots, n\} = \text{span} \{e_t: t=1, \ldots, n\} = \clh,
\]
and hence $e_j$ is a cyclic vector of the weighted shift matrix $S$.

\noindent Now suppose $S$ is a weighted shift corresponding to $\{e_i\}_{i \in \mathbb{Z}} \in B_{\clh}$, and $j \in \mathbb{Z}$ is fixed. In this case, it follows that
\[
S^p e_j = \alpha_{m+j} e_{j+p} \qquad (p \geq 0),
\]
and
\[
S^{*q} e_j = \alpha_{j-q} e_{j-q} \qquad (q > 0),
\]
where $\alpha_i$'s are non-zero scalars. Clearly, as in the weighted shift matrix case, this yields the desired result.
\end{proof}

Let $\cls$ be a closed subspace of $\clh$. To avoid confusion of notation we denote by $Q_{\cls}$ the orthogonal projection of $\clh$ onto $\cls$. The following  elementary fact will be frequently used in the irreducibility part of BCL pairs.

\begin{lemma}\label{lemma: proj reducing}
Let $\cls$ be a closed subspace of $\clh$, and let $P \in \clb(\clh)$ be a projection. Then $\cls$ reduces $P$ if and only if $\cls$ reduces $P^\perp$.
\end{lemma}
\begin{proof}
Since $P^\perp$ is also a projection, it is enough to prove the lemma in one direction. Suppose that $\cls$ reduces $P$. By assumption, $Q_{\cls} P = P Q_{\cls}$, and hence
\[
Q_{\cls} P^\perp = Q_{\cls} (I - P) = Q_{\cls} - P Q_{\cls} = P^\perp Q_{\cls}.
\]
This proves that $\cls$ reduces $P^\perp$.
\end{proof}

Let $T \in \clb(\clh)$ be a compact self-adjoint operator. We know that $\sigma(T) = \{\lambda_i : i \in \Lambda\}$, for some countable set $\Lambda$, and the spectral decomposition of $T$ as
\[
\clh = \bigoplus_{i \in \Lambda} E_{\lambda_i}(T),
\]
We have the following simple and well known property:

\begin{lemma}\label{lemma: diagonal}
Let $\mathcal{S}$ be a closed $T$-invariant subspace of $\mathcal{H}$. If $\oplus_{i \in \Lambda} x_i  \in \cls$, then $x_i \in \mathcal{S}$ for all $i \in \Lambda$.
\end{lemma}
\begin{proof}
Since $T$ is self-adjoint, we know that $Q_{\mathcal{S}} T = T Q_{\cls}$. Fix $i \in \Lambda$. Then
\[
T Q_{\mathcal{S}} y_i = Q_{\cls} T y_i = \lambda_i Q_{\cls} y_i,
\]
for all $y_i \in E_{\lambda_i}(T)$ implies that $E_{\lambda_i}(T)$ is invariant under $Q_{\mathcal{S}}$, and hence $E_{\lambda_i}(T)$ reduces $Q_{\cls}$. This says that
\[
Q_{\cls} Q_{E_{\lambda_i}(T)} = Q_{E_{\lambda_i}(T)} Q_{\cls} \qquad (i \in \Lambda).
\]
In particular, if $x = \oplus_{i \in \Lambda} x_i \in \mathcal{S}$, then
\[
x_i = Q_{E_{\lambda_i}(T)} x = Q_{E_{\lambda_i}(T)} Q_{\cls} x = Q_{\cls} Q_{E_{\lambda_i}(T)} x = Q_{\cls} x_i,
\]
and hence, $x_i \in \cls$ for all $i \in \Lambda$.
\end{proof}

\newsection{Eigenspaces of different dimensions}\label{section-basic results}

In this section, we prove that the answer to Question \ref{Q1} is negative in general. Our construction is a byproduct of certain dimension inequality. More specifically, in Corollary \ref{equ}, we prove that Question \ref{Q1} is in negative for all distinguished diagonal operators $T$ acting on finite-dimensional Hilbert spaces for which
\[
\mbox{dim}\, E_1(T) \neq \mbox{dim}\, E_{-1}(T).
\]

We start with the structure of operators that can be expressed as the difference of two projections \cite{DJS2016}. Let $A \in \clb(\mathcal{H})$ be a self-adjoint contraction. Then $\ker A$, $\ker(A-I)$ and $\ker (A+I)$ reduces $A$ \cite{Hal}. Hence there exists a closed subspace $\clh_0 \subseteq \clh$ such that $\clh$ admit the direct sum decomposition
\[
\clh = \ker A \oplus \ker(A-I) \oplus \ker (A+I) \oplus \mathcal{H}_0.
\]
Let us now assume that $\mathcal{H}_0 = \mathcal{K} \oplus \mathcal{K}$ for some Hilbert space $\mathcal{K}$, and suppose, with respect to
\[
\mathcal{H} = \ker A \oplus \ker(A-I) \oplus \ker(A+I) \oplus \mathcal{K} \oplus \mathcal{K},
\]
$A$ admits the block-diagonal operator matrix representation
\[
A =
\begin{bmatrix}
0 & & & \\
& I & & \\
& & -I & \\
& & & D & \\
& & & & -D\\
\end{bmatrix},
\]
for some positive contraction $D \in \clb(\mathcal{K})$. The following comes from \cite[Theorem 3.2]{DJS2016}:

\begin{theorem}\label{struc}
With notations as above, the diagonal operator $A$ is a difference of two projections. Moreover, if $A = P - Q$ for some projections $P$ and $Q$, then there exists a projection $R \in \clb(\ker A)$ such that
\begin{align*}
P = R \oplus I \oplus 0 \oplus P_U \quad \mbox{and} \quad Q = R \oplus 0 \oplus I \oplus Q_U,
\end{align*}
where \begin{equation*}
P_U = \frac{1}{2} \left[ \begin{array}{cc}
I + D & U(I-D^2)^{\frac{1}{2}} \\
U^* (I-D^2)^{\frac{1}{2}} & I - D\\
\end{array}
\right]
\mbox{~and~}
Q_U = \frac{1}{2} \left[ \begin{array}{cc}
I - D & U(I-D^2)^{\frac{1}{2}} \\
U^* (I-D^2)^{\frac{1}{2}} & I + D\\
\end{array}
\right],
\end{equation*}
are projections in $\clb(\mathcal{K} \oplus \mathcal{K})$,	and $U \in \clb(\mathcal{K})$ is a unitary commuting with $D$.
\end{theorem}

Therefore, given a diagonal operator $A$ as above, Theorem \ref{struc} parameterizes pairs of projections in terms of the positive contraction $D$ and a unitary $U \in \{D\}'$, whose differences are $A$. In particular, if $D = \lambda I_{\mathcal{K}}$ for some $\lambda \in [0, 1]$, then
\begin{equation}\label{eqn: P U}
P_U = \left[ \begin{array}{cc}
\frac{1+\lambda}{2}I_{\clk} & \frac{\sqrt{1-\lambda^2}}{2}U
\\
\frac{\sqrt{1-\lambda^2}}{2} U^* & \frac{1-\lambda}{2}I_{\clk}
\end{array}
\right].
\end{equation}
The following result, in particular, presents an orthonormal basis of the range space of $P_U$.

\begin{lemma}\label{range}
Let $\mathcal{H}$ and $\mathcal{K}$ be Hilbert spaces, $U : \mathcal{H} \rightarrow \mathcal{K}$ a unitary operator, and let $\lambda \in [0, 1]$. Define the projection $P : \mathcal{H} \oplus \mathcal{K} \rightarrow \mathcal{H} \oplus \mathcal{K}$ by
\begin{equation*}
P = \left[ \begin{array}{cc}
\frac{1+\lambda}{2}I_{\mathcal{H}} & \frac{\sqrt{1-\lambda^2}}{2} U^*
\\
\frac{\sqrt{1-\lambda^2}}{2} U & \frac{1-\lambda}{2}I_{\mathcal{K}}
\\
\end{array}
\right].
\end{equation*}
If $\{ e_i : i \in \Lambda \} \in B_{\mathcal{H}}$, then $\Big\{ \sqrt{\frac{1+\lambda}{2}} e_i \oplus \sqrt{\frac{1-\lambda}{2}} Ue_i : i \in \Lambda \Big \} \in B_{\ran P}$.
\end{lemma}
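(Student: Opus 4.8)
The plan is to verify directly that the proposed vectors lie in $\ran P$, are orthonormal, and span $\ran P$; the key structural input is that $P$ is a projection with a known operator matrix built from a unitary $U$ and the scalar $\lambda$.

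First I would check that each candidate vector is fixed by $P$, i.e.\ that $P\big(\sqrt{\tfrac{1+\lambda}{2}}\,e_i \oplus \sqrt{\tfrac{1-\lambda}{2}}\,Ue_i\big) = \sqrt{\tfrac{1+\lambda}{2}}\,e_i \oplus \sqrt{\tfrac{1-\lambda}{2}}\,Ue_i$. Applying the $2\times 2$ operator matrix, the first component is $\tfrac{1+\lambda}{2}\sqrt{\tfrac{1+\lambda}{2}}\,e_i + \tfrac{\sqrt{1-\lambda^2}}{2}\,U^*\big(\sqrt{\tfrac{1-\lambda}{2}}\,Ue_i\big) = \big(\tfrac{1+\lambda}{2}\sqrt{\tfrac{1+\lambda}{2}} + \tfrac{\sqrt{1-\lambda^2}}{2}\sqrt{\tfrac{1-\lambda}{2}}\big)e_i$, and the scalar simplifies, using $\sqrt{1-\lambda^2} = \sqrt{1-\lambda}\sqrt{1+\lambda}$, to $\sqrt{\tfrac{1+\lambda}{2}}\big(\tfrac{1+\lambda}{2} + \tfrac{1-\lambda}{2}\big) = \sqrt{\tfrac{1+\lambda}{2}}$, as desired; the second component is handled symmetrically using $UU^* = I_{\mathcal{K}}$. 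Hence each listed vector belongs to $\ran P$. (As a sanity check one also sees these vectors are unit vectors, since $\tfrac{1+\lambda}{2} + \tfrac{1-\lambda}{2}\|Ue_i\|^2 = \tfrac{1+\lambda}{2} + \tfrac{1-\lambda}{2} = 1$.)

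Next I would verify orthonormality: for $i\neq j$, $\big\langle \sqrt{\tfrac{1+\lambda}{2}}e_i \oplus \sqrt{\tfrac{1-\lambda}{2}}Ue_i,\ \sqrt{\tfrac{1+\lambda}{2}}e_j \oplus \sqrt{\tfrac{1-\lambda}{2}}Ue_j\big\rangle = \tfrac{1+\lambda}{2}\langle e_i,e_j\rangle + \tfrac{1-\lambda}{2}\langle Ue_i,Ue_j\rangle = 0$, since $U$ is unitary and $\{e_i\}$ is orthonormal; combined with the norm-one computation above, the family is orthonormal.

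The remaining, and slightly more delicate, step is to show the listed orthonormal set is \emph{complete} in $\ran P$, i.e.\ that its closed span is all of $\ran P$. For this I would argue that $\ran P$ is exactly the closed span of $\{P(e_i\oplus 0) : i\in\Lambda\}\cup\{P(0\oplus f) : f\in\mathcal{K}\}$; in fact, since $\{e_i\oplus 0\}\cup\{0\oplus Ue_i\}$ is an orthonormal basis of $\mathcal{H}\oplus\mathcal{K}$ and $\ran P = \overline{P(\mathcal{H}\oplus\mathcal{K})}$, it suffices to note that $P(e_i\oplus 0) = \tfrac{1+\lambda}{2}e_i \oplus \tfrac{\sqrt{1-\lambda^2}}{2}Ue_i = \sqrt{\tfrac{1+\lambda}{2}}\big(\sqrt{\tfrac{1+\lambda}{2}}e_i \oplus \sqrt{\tfrac{1-\lambda}{2}}Ue_i\big)$ is a scalar multiple of the $i$-th listed vector, and similarly $P(0\oplus Ue_i) = \tfrac{\sqrt{1-\lambda^2}}{2}e_i \oplus \tfrac{1-\lambda}{2}Ue_i = \sqrt{\tfrac{1-\lambda}{2}}\big(\sqrt{\tfrac{1+\lambda}{2}}e_i \oplus \sqrt{\tfrac{1-\lambda}{2}}Ue_i\big)$ is again a scalar multiple of the same vector (both scalars being nonzero for $\lambda\in(0,1)$). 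Thus $\overline{P(\mathcal{H}\oplus\mathcal{K})}$ is contained in the closed span of the listed vectors, and since we have already shown the reverse inclusion and orthonormality, the listed family is an orthonormal basis of $\ran P$. The main obstacle is purely bookkeeping: keeping the two "halves" $e_i$ and $Ue_i$ and the $U$ versus $U^*$ placement straight while checking $P^2=P$-type cancellations; there is no conceptual difficulty once the unitarity of $U$ is used to collapse the cross terms.
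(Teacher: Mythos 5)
Your proof is correct and takes essentially the same route as the paper: a direct computation with the $2\times 2$ operator matrix of $P$, using $\sqrt{1-\lambda^2}=\sqrt{1-\lambda}\sqrt{1+\lambda}$ and the unitarity of $U$. The only cosmetic difference is in how completeness is argued — the paper reduces $\ran P$ to $\{P(x\oplus 0):x\in\mathcal{H}\}$ via the identity $P(0\oplus y)=P\big(\sqrt{\tfrac{1-\lambda}{1+\lambda}}U^*y\oplus 0\big)$ and normalizes $P(e_i\oplus 0)$, whereas you apply $P$ to the orthonormal basis $\{e_i\oplus 0\}\cup\{0\oplus Ue_i\}$ and note each image is a nonzero scalar multiple of the corresponding listed vector; both are the same elementary argument packaged slightly differently.
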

\begin{proof}
For each $x \in \mathcal{H}$, a simple calculation shows that
\begin{equation}\label{eqn: Px}
P(x \oplus 0) = \frac{1+\lambda}{2}x \oplus \frac{\sqrt{1-\lambda^2}}{2} Ux =
P\Big(0 \oplus \sqrt{\frac{1+\lambda}{1-\lambda}}Ux\Big).
\end{equation}
By duality
\begin{equation}\label{eqn: Py}
P(0 \oplus y) = \frac{\sqrt{1-\lambda^2}}{2} U^*y \oplus \frac{1-\lambda}{2}y = P\Big(\sqrt{\frac{1-\lambda}{1+\lambda}}U^*y \oplus 0\Big),
\end{equation}
for all $y \in \mathcal{K}$. The above equalities imply that
\begin{equation}\label{eq-ran P}
\mbox{ran} P = \{P(x \oplus 0) : x \in \mathcal{H}\} = \{P(0 \oplus y) : y \in \mathcal{K}\}.
\end{equation}
On the other hand, since $\{e_i : i \in \Lambda \} \in B_{\mathcal{H}}$, by \eqref{eqn: Px} we have
\[
\|P(e_i \oplus 0)\| = \sqrt{\frac{1+\lambda}{2}} \qquad (i \in \Lambda).
\]
Then \eqref{eq-ran P} and the first equality of \eqref{eqn: Px} readily implies that $\Big \{\sqrt{\frac{1+\lambda}{2}} e_i \oplus \sqrt{\frac{1-\lambda}{2}} Ue_i : i \in \Lambda\Big \} \in B_{\ran P}$, which completes the proof of the lemma.
\end{proof}

Similarly, \eqref{eq-ran P} and the first equality of \eqref{eqn: Py} implies that $\Big \{\sqrt{\frac{1-\lambda}{2}} U^* e_i \oplus  \sqrt{\frac{1+\lambda}{2}} e_i : i \in \Lambda\Big \} \in B_{\ran P}$. We also note that the index set $\Lambda$ is at most countable.

Recall that $C = UPU^* - P$ is the defect operator of the BCL triple $(\cle, U, P)$ (see \eqref{eqn: def C}). Then, using the notation $P^{\perp} = I - P$, we find
\begin{equation}\label{eq-defect other one}
C = P^{\perp} - U P^{\perp} U^*.
\end{equation}
The following appears to be a distinctive property of defect operators on finite-dimensional spaces.

\begin{theorem}\label{2.4}
Let $(\cle, U, P)$ be a BCL triple. If $\cle$ is finite-dimensional, then
\[
\dim E_1(C) = \dim E_{-1}(C).
\]
\end{theorem}
\begin{proof}
Suppose $\sigma(C) \cap (0,1) = \{\lambda_i : 1 \leq i \leq m \}$ (possibly an empty set). By \cite[Lemma 4.2]{HQY2015} (or, see \eqref{eqn - plus minus lambda}), it follows that $- \lambda_i \in \sigma(C)$ and
\[
\dim E_{\lambda_i}(C) = \dim E_{-\lambda_i}(C) \qquad (i = 1, 2, \cdots, m).
\]
Then, for each $i = 1, \ldots, m$, choose a unitary $U_i : E_{-\lambda_i}(C) \rightarrow E_{\lambda_i}(C)$. We set
\[
\clk_+ = \bigoplus_{i = 1}^m E_{\lambda_i}(C), \text{ and } \clk_- = \bigoplus_{i = 1}^m E_{-\lambda_i}(C),
\]
and $U^{\prime}: = \bigoplus_{i=1}^m U_i$. Therefore, $U': \clk_- \rightarrow \clk_+$ is a unitary. Also, set
\[
\tilde{\mathcal{E}}: = E_0(C) \oplus E_1(C) \oplus E_{-1}(C) \oplus \clk_+ \oplus \clk_+.
\]
Clearly
\[
{\mathcal{E}} = E_0(C) \oplus E_1(C) \oplus \clk_+ \oplus E_{-1}(C) \oplus \clk_-,
\]
and hence
\[
W =
\begin{bmatrix}
I_{E_0(C)} & 0 & 0 & 0 & 0\\
0 & I_{E_1(C)} & 0 & 0 & 0
\\
0 & 0 & 0 & I_{E_{-1}(C)} & 0
\\
0 & 0 & I_{\clk_+} & 0 & 0
\\
0 & 0 & 0 & 0 & U^{\prime}
\end{bmatrix},
\]
defines a unitary $W: \mathcal{E} \rightarrow \tilde{\cle}$. If we define $\tilde{C} := W C W^*$, then a simple calculation shows that $\tilde{C}$ is a diagonal operator
\[
\tilde{C} =
\begin{bmatrix}
0_{E_0(C)} & & & & \\
& I_{E_1(C)} & & & \\
& & - I_{E_{-1}(C)} & & \\
& & & X & \\
& & & & -X
\end{bmatrix},
\]
where
\[
X := \bigoplus_{i = 1}^m \lambda_i I_{E_{\lambda_i}(C)}.
\]
Moreover, $P_1 := W P^\perp W^*$ and $P_2 :=  W (U P^\perp U^*) W^*$ define projections in $\clb(\tilde{\cle})$, and
\[
\tilde{C} = W C W^* = W (P^\perp - U P^\perp U^*) W^* = P_1 - P_2.
\]
By Theorem \ref{struc}, there exist a projection $R \in \clb(E_0(C))$ and a unitary $Y \in \clb(\clk_+)$ commuting with $X$ such that
\begin{align*}
P_1 = R \oplus I_{E_1(C)} \oplus 0_{E_{-1}(C)} \oplus P_Y, \text{ and } \ P_2 = R \oplus 0_{E_1(C)} \oplus I_{E_{-1}(C)} \oplus Q_Y,
\end{align*}
where the projections $P_Y$ and $Q_Y$ on $\clk_+ \oplus \clk_+$ are given by
\[
P_Y = \frac{1}{2} \left[ \begin{array}{cc}
I + X & Y(I-X^2)^{\frac{1}{2}} \\
Y^* (I-X^2)^{\frac{1}{2}} & I - X\\
\end{array}
\right],
\mbox{~and~}
Q_Y = \frac{1}{2} \left[ \begin{array}{cc}
I - X & Y(I-X^2)^{\frac{1}{2}} \\
Y^* (I-X^2)^{\frac{1}{2}} & I + X\\
\end{array}
\right].
\]
From the definition of $X$ above, we have
\[
\frac{1}{2}(I \pm X) = \bigoplus_{i = 1}^m \Big(\frac{1 \pm \lambda_i}{2}I_{E_{\lambda_i}(C)}\Big),
\]
and
\[
\frac{1}{2}(I - X^2)^{\frac{1}{2}} = \bigoplus_{i = 1}^m \Big(\frac{\sqrt{1-\lambda_i^2}}{2} I_{E_{\lambda_i}(C)}\Big).
\]
Going back to the proof of Lemma \ref{range}, a closer inspection reveals that equalities similar to \eqref{eqn: Px} and \eqref{eqn: Py} also hold in the present setting. Indeed, if $x = \oplus_{i=1}^m x_i\in \clk_+$, then
\[
\begin{split}
P_Y(x \oplus 0) & = \Big(\oplus_{i=1}^m \frac{1+\lambda_i}{2}x_i\Big) \oplus \Big(\oplus_{i=1}^m \frac{\sqrt{1-\lambda_i^2}}{2} Y^* x_i \Big)
\\
& = P_Y \Big(0 \oplus \Big(\oplus_{i=1}^m \sqrt{\frac{1+\lambda_i}{1-\lambda_i}} Y^*x_i\Big)\Big),
\end{split}
\]
and by duality
\[
\begin{split}
Q_Y(0 \oplus x) & = \Big(\oplus_{i=1}^m \frac{\sqrt{1-\lambda_i^2}}{2} Y x_i \Big) \oplus \Big(\oplus_{i=1}^m \frac{1+\lambda_i}{2}x_i\Big)
\\
& = Q_Y \Big(\Big(\oplus_{i=1}^m \sqrt{\frac{1+\lambda_i}{1-\lambda_i}} Yx_i\Big) \oplus 0\Big).
\end{split}
\]
So we find (as similar to \eqref{eq-ran P})
\[
\ran P_Y = \{P_Y(x \oplus 0) : x \in \clk_+\}, \text{ and } \ran\, Q_Y = \{ Q_Y(0 \oplus x) : x \in \clk_+\}.
\]
Moreover, the vectors on the right-hand sides of $P_Y(x \oplus 0)$ and $Q_Y(0 \oplus x)$ in the above pair of equalities readily imply that $\tau: \ran P_Y \rightarrow \ran Q_Y$ defined by
\[
\tau (P_Y(x \oplus 0)) = Q_Y(0 \oplus x) \qquad (x \in \clk_+),
\]
is a linear isomorphism. In particular, $\mbox{rank} P_Y = \mbox{rank} Q_Y$. Also note that
\[
\mbox{rank} P_1 = \mbox{rank} (P^{\perp}) = \mbox{rank} (UP^{\perp}U^*) = \mbox{rank} P_2.
\]
Finally, since
\[
\mbox{rank}P_1 = \mbox{rank} R + \mbox{dim} E_1(C) + \mbox{rank} P_Y,
\]
and
\[
\mbox{rank} P_2 = \mbox{rank} R + \mbox{dim} E_{-1}(C) + \mbox{rank} Q_Y,
\]
we must have that $\mbox{dim}E_1(C) = \dim E_{-1}(C)$. This completes the proof of the theorem.
\end{proof}

We are now ready to prove that Question \ref{Q1} is negative in general.

\begin{coro}\label{equ}
Let $\cle$ be a finite-dimensional Hilbert space and let $T \in \clb(\cle)$ be a distinguished diagonal operator. If
\[
\mbox{dim}\, E_1(T) \neq \mbox{dim}\, E_{-1}(T),
\]
then it is not possible to find a BCL pair $(M_{\Phi_1}, M_{\Phi_2})$ on $H^2_{\cle}(\mathbb{D})$ such that the non-zero part of $C(M_{\Phi_1}, M_{\Phi_2})$ is unitarily equivalent to $T$.
\end{coro}
\begin{proof}
Assume by contradiction that $(M_{\Phi_1}, M_{\Phi_2})$ is a BCL pair on $H^2_{\cle}(\mathbb{D})$ such
\[
C(M_{\Phi_1}, M_{\Phi_2})|_{(\ker C(M_{\Phi_1}, M_{\Phi_2}))^{\perp}} \cong T.
\]
Then $\text{dim} \cle = \text{dim} (\ker C(M_{\Phi_1}, M_{\Phi_2}))^{\perp}$. Since $\cle$ is finite-dimensional, and
\[
(\ker C(M_{\Phi_1}, M_{\Phi_2}))^{\perp} \subseteq \cle,
\]
it follows that
\[
\cle = (\ker C(M_{\Phi_1}, M_{\Phi_2}))^{\perp},
\]
and hence
\[
T \cong C(M_{\Phi_1}, M_{\Phi_2})|_{(\ker C(M_{\Phi_1}, M_{\Phi_2}))^{\perp}} = C \in \clb(\cle).
\]
An appeal to Theorem \ref{2.4} then says that
\begin{align*}
\dim E_1(T) = \dim E_1(C) = \dim E_{-1}(C) = \dim E_{-1}(T),
\end{align*} 	
which is absurd.
\end{proof}

Then, for a hope of an affirmative answer to Question \ref{Q1} for finite-dimensional spaces, we must assume that $\mbox{dim}\, E_1(T) = \mbox{dim}\, E_{-1}(T)$. We settle this issue in the following two sections.

\newsection{Diagonals with more than one positive eigenvalue}\label{section-finite dim}

Let $\mathcal{E}$ be a finite-dimensional Hilbert space, and let $T \in \clb(\cle)$ be a distinguished diagonal operator. In view of Corollary \ref{equ}, it is natural to ask: Does Question \ref{Q1} has an affirmative answer whenever
\[
\mbox{dim} E_1(T) = \mbox{dim} E_{-1}(T)?
\]
As we will see in this and the following sections, the answer to this question is still case-based. Note that in view of Corollary \ref{cor-irred}, for an affirmative answer to Question \ref{Q1} it is enough to construct an irreducible BCL triple $(\cle, U, P)$ such that $UPU^* - P = T$.

From now on in this section, we assume that $\cle$ is a finite-dimensional Hilbert space and $T \in \clb(\cle)$ is a distinguished diagonal operator such that
\[
\mbox{dim} E_1(T) = \mbox{dim} E_{-1}(T).
\]
We begin with the scrutinization of the effect of $T$ on the geometry of $\cle$. It is clear from the definition of distinguished diagonal operators that
\[
\sigma(T) = \{\pm \lambda_j : j \in \Lambda \},
\]
where $\Lambda = \{1, 2, \cdots, n\}$ for some $n \in \mathbb{N}$ and $0 < \lambda_j \leq 1$ for all $j=1, \ldots, n$ (note that $T$ is injective). Then, by the spectral theorem, we have
\[
\mathcal{E} = \oplus_{i \in \Lambda}\Big(E_{\lambda_i}(T) \oplus E_{-\lambda_i}(T)\Big).
\]
Moreover, for each $i \in \Lambda$, \eqref{eqn - plus minus lambda} implies that
\[
k_i := \mbox{dim} E_{\lambda_i}(T) = \mbox{dim} E_{-\lambda_i}(T) < \infty.
\]
In this case, note that the dimension of $\cle$ is an even number. Now we collect a number of basic facts about $T$ and the finite-dimensional Hilbert space $\cle$.

\begin{lemma}\label{lemma: main fd}
For each $i \in \Lambda$, fix a unitary $U_i: E_{\lambda_i}(T) \rightarrow E_{-\lambda_i}(T)$, and a basis $\{e^i_t: t = 1, \ldots, k_i \} \in B_{E_{\lambda_i}(T)}$. Then:
\begin{enumerate}
\item $\cup_{i \in \Lambda} \{\sqrt{1-\lambda_i^2} f^i_t \oplus (- \lambda_i) \tilde{f}^i_t, \; \lambda_i f^i_t \oplus \sqrt{1-\lambda_i^2} \tilde{f}^i_t : t =1, \ldots, k_i \} \in B_{\mathcal{E}}$, where
\[
f^i_t := \sqrt{\frac{1+\lambda_i}{2}} e^i_t \oplus \sqrt{\frac{1-\lambda_i}{2}} U_ie^i_t \text{ and }
\tilde{f}^i_t := \sqrt{\frac{1-\lambda_i}{2}} e^i_t \oplus \Big(- \sqrt{\frac{1+\lambda_i}{2}}\Big) U_ie^i_t,
\]
for all $t = 1, \ldots, k_i$ and $i \in \Lambda$.

\item $Q_i = \left[ \begin{array}{cc}
\frac{1+\lambda_i}{2}I_{E_{\lambda_i}(T)} & \frac{\sqrt{1-\lambda_i^2}}{2} U_i^*
\\
\frac{\sqrt{1-\lambda_i^2}}{2} U_i & \frac{1-\lambda_i}{2}I_{E_{-\lambda_i}(T)}
\end{array}
\right]$ is a projection on $E_{\lambda_i}(T) \oplus E_{-\lambda_i}(T)$, $i \in \Lambda$, and
\[
P = \bigoplus_{i \in \Lambda} (I_{E_{\lambda_i}(T) \oplus E_{-\lambda_i}(T)} - Q_i),
\]
is a projection on $\cle$.

\item $\cup_{i \in \Lambda} \{\tilde{f}^i_t: t=1, \ldots, k_i \} \in B_{\ran P}$, and $\cup_{i \in \Lambda} \{f^i_t: t=1, \ldots, k_i \} \in B_{\ran P^{\bot}}$.

\item For each $t =1, \ldots, k_i$ and $i \in \Lambda$, we have
\[
T f^i_t = \lambda_i^2 f^i_t + \lambda_i \sqrt{1 - \lambda_i^2} \tilde{f}^i_t, \text{ and } T \tilde{f}^i_t = \lambda_i \sqrt{1 - \lambda_i^2} f^i_t
- \lambda_i^2 \tilde{f}^i_t.
\]
\end{enumerate}
\end{lemma}
\begin{proof}
By Theorem \ref{struc}, or more specifically, by \eqref{eqn: P U}, we conclude that
\[
Q_i := \left[ \begin{array}{cc}
\frac{1+\lambda_i}{2}I_{E_{\lambda_i}(T)} & \frac{\sqrt{1-\lambda_i^2}}{2} U_i^*
\\
\frac{\sqrt{1-\lambda_i^2}}{2} U_i & \frac{1-\lambda_i}{2}I_{E_{-\lambda_i}(T)}
\end{array}
\right] \in \clb(E_{\lambda_i}(T) \oplus E_{-\lambda_i}(T)),
\]
defines a projection on $E_{\lambda_i}(T) \oplus E_{-\lambda_i}(T)$. Let $P := Q^{\bot} \in \clb(\cle)$, where $Q :=\oplus_{i \in \Lambda} Q_i \in \clb(\mathcal{E})$. Then
\[
P = \bigoplus_{i \in \Lambda} \Big(I_{E_{\lambda_i}(T) \oplus E_{-\lambda_i}(T)} - Q_i\Big),
\]
is a projection in $\clb(\cle)$. Next, we fix an $i \in \Lambda$. Since $\{e^i_t: t = 1, \ldots, k_i \} \in B_{E_{\lambda_i}(T)}$, it follows that $\{U_i e^i_t: t = 1, \ldots, k_i \} \in B_{E_{-\lambda_i}(T)}$, and hence, by Lemma \ref{range}, $\{f^i_t: t = 1, \ldots, k_i\} \in  B_{\ran\, Q_i}$, where
\[
f^i_t := \sqrt{\frac{1+\lambda_i}{2}} e^i_t \oplus \sqrt{\frac{1-\lambda_i}{2}} U_ie^i_t \qquad (t = 1, \ldots, k_i).
\]
Similarly, Lemma \ref{range} applied to $I - Q_i$ yields that $\{\tilde{f}^i_t: t = 1, \ldots, k_i\} \in B_{\ran\, Q_i^{\bot}}$, where
\[
\tilde{f}^i_t := \sqrt{\frac{1-\lambda_i}{2}} e^i_t \oplus \Big(- \sqrt{\frac{1+\lambda_i}{2}}\Big) U_ie^i_t \qquad (t = 1, \ldots, k_i).
\]
Therefore
\[
\{f^i_t, \tilde{f}^i_t: t = 1, \ldots, k_i\} \in B_{E_{\lambda_i}(T) \oplus E_{-\lambda_i}(T)},
\]
and hence, from the definition of $P$, it follows that
\[
\cup_{i \in \Lambda} \{\tilde{f}^i_t: t=1, \ldots, k_i \} \in B_{\ran P}, \text{ and } \cup_{i \in \Lambda} \{f^i_t: t=1, \ldots, k_i \} \in B_{\ran P^{\bot}}.
\]
Now by changing $\lambda_i$ to $-\lambda_i$, we have
\[
\Big \{ \sqrt{\frac{1 - \lambda_i}{2}} \ e^i_t \oplus \sqrt{\frac{1 + \lambda_i}{2}} \ U_i e^i_t, \; \sqrt{\frac{1 + \lambda_i}{2}} \ e^i_t \oplus \Big(- \sqrt{\frac{1 - \lambda_i}{2}}\Big) \ U_i e^i_t: t =1, \ldots, k_i \Big \},
\]
is in $B_{E_{\lambda_i}(T) \oplus E_{-\lambda_i}(T)}$ for all $i \in \Lambda$. Since
\[
\sqrt{\frac{1 - \lambda_i}{2}} e^i_t \oplus \sqrt{\frac{1 + \lambda_i}{2}} U_i e^i_t = \sqrt{1-\lambda_i^2} f^i_t - \lambda_i \tilde{f}^i_t,
\]
and
\[
\sqrt{\frac{1 + \lambda_i}{2}} e^i_t \oplus \Big(- \sqrt{\frac{1 - \lambda_i}{2}} U_i e^i_t\Big) = \lambda_i f^i_t + \sqrt{1-\lambda_i^2} \tilde{f}^i_t,
\]
for all $i$ and $t$, we conclude that
\[
\cup_{i \in \Lambda} \{\sqrt{1-\lambda_i^2} f^i_t \oplus (- \lambda_i) \tilde{f}^i_t, \; \lambda_i f^i_t \oplus \sqrt{1-\lambda_i^2} \tilde{f}^i_t : t =1, \ldots, k_i \} \in B_{\mathcal{E}}.
\]
Finally, since
\[
T e_t^i = \lambda_i e^i_t, \text{ and } T (U e^i_t) = - \lambda_i (U e^i_t),
\]
an easy combination of the basis vectors $f^i_t$ and $\tilde{f}^i_t$ defined as above implies that
\[
T f^i_t = \lambda_i^2 f^i_t + \lambda_i \sqrt{1 - \lambda_i^2} \tilde{f}^i_t, \text{ and } T \tilde{f}^i_t = \lambda_i \sqrt{1 - \lambda_i^2} f^i_t
- \lambda_i^2 \tilde{f}^i_t,
\]
for all $t =1, \ldots, k_i$ and $i \in \Lambda$. This completes the proof of the lemma.
\end{proof}

Before we prove that Question \ref{Q1} has an affirmative answer whenever $\sigma(T)$ has at least two distinct positive numbers, let us introduce the following notation: Given a scalar $\lambda$ and a natural number $m$, we denote by $[\lambda]_m$ the $m \times m$ constant diagonal matrix with diagonal entry $\lambda$. That is
\begin{equation}\label{eq-lambda n}
[\lambda]_m =
\begin{bmatrix}
\lambda  & &
\\
&  \ddots & \\
& & \lambda
\end{bmatrix}_{m \times m}.
\end{equation}

\begin{theorem}\label{thm: main 1}
Let $\mathcal{E}$ be a finite-dimensional Hilbert space, $T \in \clb(\cle)$ be a distinguished diagonal operator and suppose $\mbox{dim} E_1(T) = \mbox{dim} E_{-1}(T)$. If $T$ has at least two distinct positive eigenvalues, then Question \ref{Q1} has an affirmative answer.
\end{theorem}

\begin{proof}
We need to construct an irreducible BCL triple $(\cle, U, P)$ such that $T = P^\perp - U P^\perp U^*$ (see \eqref{eq-defect other one}). We continue to work in the setting of Lemma \ref{lemma: main fd}, and consider the projection $P$ constructed in the same lemma. Recall that
\[
\sigma(T) = \{\pm \lambda_i: i \in \Lambda\},
\]
where $\Lambda = \{1, \ldots, n\}$. We now construct the required unitary $U \in \clb(\mathcal{E})$. In view of $\cle = \ran P^{\perp} \oplus \ran P$ and Lemma \ref{lemma: main fd}(3), define $U$ on $\ran P^{\perp}$ by
\[
U f^i_t = \sqrt{1 - \lambda_i^2}f^i_t \oplus (- \lambda_i) \tilde{f}^i_t,
\]
for all $t = 1, \ldots, k_i$ and $i=1, \ldots, n$, and define $U$ on $\ran P$ by
\[
U \tilde{f}^i_t =
\begin{cases} \lambda_i f^i_{t+1} \oplus (\sqrt{1 - \lambda_i^2}) \tilde{f}^i_{t+1} & \mbox{if}~ 1 \leq t < k_i \, \mbox{and} \, 1 \leq i \leq n
\\
\lambda_{i+1} f^{i+1}_1 \oplus (\sqrt{1-\lambda_{i+1}^2}) \tilde{f}^{i+1}_1 & \mbox{if}~ t = k_i \, \mbox{and} \, 1 \leq i < n
\\
\lambda_1 f^1_1 \oplus (\sqrt{1 - \lambda_1^2}) \tilde{f}^1_1 & \mbox{if}~ t= k_n \, \mbox{and} \, i= n.
\end{cases}
\]
Then $U$ maps an orthonormal basis of $\cle$ to an orthonormal basis of $\cle$, and hence $U \in \clb(\cle)$ is a unitary. Suppose
 \begin{equation*}
U = \left[ \begin{array}{cc}
U_{11} & U_{12} \\
U_{21} & U_{22}
\end{array}
\right],
\end{equation*}
on $\mathcal{E} = \ran P^{\bot} \oplus \ran P$. We now compute matrix representations of the entries $\{U_{ij}\}_{i,j=1}^2$ with respect to the ordered orthonormal bases $\cup_{i = 1}^n \{f^i_t : 1 \leq t \leq k_i \} \in B_{\ran P^{\perp}}$ and $\cup_{i = 1}^n \{\tilde{f}^i_t : 1 \leq t \leq k_i \} \in B_{\ran P}$ (see part (3) of Lemma \ref{lemma: main fd}). Observe that $U_{11} = Q_{\ran P^\perp} U|_{\ran P^\perp}$ and $U_{21} = Q_{\ran P} U|_{\ran P^\perp}$, and hence we obtain the matrix representations of $U_{11}: \ran P^{\perp} \rightarrow \ran P^{\perp}$ and $U_{21}: \ran P^{\perp} \rightarrow \ran P$ with respect to the above orthonormal bases of $ \ran P^{\perp}$ and $ \ran P$ as
\[
[U_{11}] = \mbox{diag}\Big( \Big[\sqrt{1-\lambda_1^2}\Big]_{k_1}, \ldots, \Big[\sqrt{1-\lambda_n^2}\Big]_{k_n} \Big),
\]
and (see \eqref{eq-lambda n} for the notation)
\[
[U_{21}] = \mbox{diag}\Big([-\lambda_1]_{k_1}, \ldots, [-\lambda_n]_{k_n} \Big).
\]
Moreover, since $U_{12}  = Q_{\ran P^\perp} U|_{\ran P}$ and $U_{22} = Q_{\ran P} U|_{\ran P}$, it follows that $U_{12}: \ran P \rightarrow \ran P^{\perp}$ and $U_{22}: \ran P \rightarrow \ran P$ admit weighted shift matrix representations as (see \eqref{eqn:weighted matrix 1} and \eqref{eqn:weighted matrix 2} for the notation used below)
\[
[U_{12}] = \Big[\lambda_1; J_{k_1-1}(\lambda_1), J_{k_2}(\lambda_2), \ldots, J_{k_n}(\lambda_n) \Big],
\]
and
\[
[U_{22}] = \Big[\sqrt{1 - \lambda_1^2}; J_{k_1-1}\Big(\sqrt{1 - \lambda_1^2}\Big), J_{k_2}\Big(\sqrt{1 - \lambda_2^2}\Big), \ldots, J_{k_n}\Big(\sqrt{1 - \lambda_n^2}\Big) \Big].
\]
Next, we verify that $P^{\perp} - U P^{\perp} U^* = T$. Let
\begin{equation*}
T = \left[ \begin{array}{cc}
T_{11} & T_{12} \\
T_{21} & T_{22}
\end{array}
\right],
\end{equation*}
on $\mathcal{E} = \ran P^{\bot} \oplus \ran P$. Since
\[
P^\perp - U P^\perp U^* = \begin{bmatrix}
I_{\ran P^{\perp}} - U_{11} U_{11}^* & - U_{11} U_{21}^*
\\
-U_{21} U_{11}^* & -U_{21} U_{21}^*
\end{bmatrix},
\]
the verification of $P^{\perp} - U P^{\perp} U^* = T$ amounts to verify the following equality
\begin{equation}\label{eq}
\left[ \begin{array}{cc}
T_{11} & T_{12} \\
T_{21} & T_{22}
\end{array}
\right] = \begin{bmatrix}
I_{\ran P^{\perp}} - U_{11} U_{11}^* & - U_{11} U_{21}^*
\\
-U_{21} U_{11}^* & -U_{21} U_{21}^*
\end{bmatrix}.
\end{equation}
Note that $U_{11}$ and $U_{21}$ are the only matrices that appear in the entries of the right side matrix. Since $U_{11}$ and $U_{21}$ are diagonal operators, it is easy to conclude that
\[
[I_{\ran P^{\perp}} - U_{11} U_{11}^*] = \mbox{diag}\Big([\lambda_1^2]_{k_1}, \ldots, [\lambda_n^2]_{k_n} \Big),
\]
and
\[
- [U_{11} U_{21}^*] = -[U_{21} U_{11}^*] = \mbox{diag}\Big( \Big[\lambda_1 \sqrt{1-\lambda_1^2}\Big]_{k_1}, \ldots, \Big[\lambda_n \sqrt{1-\lambda_n^2}\Big]_{k_n} \Big).
\]
By part (3) of Lemma \ref{lemma: main fd}, we find that the action of $T$ on the bases $\cup_{i = 1}^n \{f^i_t : 1 \leq t \leq k_i \} \in B_{\ran P^{\perp}}$ and $\cup_{i = 1}^n \{\tilde{f}^i_t : 1 \leq t \leq k_i \} \in B_{\ran P}$ forces $T_{ij}$, $i,j=1,2$, to be diagonal operators and yields that the corresponding entries on either side of \eqref{eq} are the same. This completes the proof of the fact that $P^{\perp} - U P^{\perp} U^* = T$.

Now all we need to verify is that $(U,P)$ is irreducible. Let $\mathcal{S}$ be a non-zero subspace of $\mathcal{E}$, and suppose that $\cls$ reduces $(U,P)$. We write
\[
\mathcal{S} = P^\perp \cls \oplus P \cls.
\]
Since $\cls$ is non-zero, either $P^\perp \cls \neq \{0\}$ or $P \cls \neq \{0\}$. Suppose $P^\perp \cls \neq \{0\}$. For the irreducibility of $(U, P)$, it is enough to prove that $\mathcal{S} = \mathcal{E}$ (the other case $P \cls \neq \{0\}$ will follow similarly). To this end, assume for a moment that $\ran P^{\perp} = P^\perp \cls$. Since $\lambda_i \neq 0$ for all $i \in \Lambda$, it follows that the diagonal operator $U_{21}: \ran P^{\perp} \rightarrow \ran P$ (see the definition of $U_{21}$ above) is a linear isomorphism. On the other hand, since $\cls$ reduces $(U, P)$, by Lemma \ref{lemma: proj reducing} we have
\begin{equation}\label{eq: U21 P perp S}
U_{21} (P^\perp \cls) = Q_{\ran P} U|_{\ran P^\perp} (P^\perp \cls) = Q_{\ran P} U (P^\perp \cls) \subseteq Q_{\ran P} \cls = P \cls.
\end{equation}
Combined together, these facts yield
\[
P \cls \supseteq U_{21}(P^{\perp} \cls) = U_{21}(\ran P^{\perp}) = \ran P,
\]
and hence $\ran P = P \cls$. Then
\[
\mathcal{S} = P^\perp \cls \oplus P \cls = \ran P^\perp \oplus \ran P = \cle,
\]
will prove that $(U, P)$ is irreducible. Therefore, for the irreducibility of $(U, P)$, it suffices to prove that
\[
\ran P^{\perp} = P^\perp \cls.
\]
To this end, observe first that by the matrix representations of $U_{12}$ and $U_{21}$, it follows that
\begin{equation}\label{formula U12U21}
U_{12} U_{21} {f}^i_t =
\begin{cases} - \lambda_i^2 f^i_{t+1} & \mbox{if}~ 1 \leq i \leq n \, \mbox{and} \, 1 \leq t < k_i
\\
- \lambda_{i+1}^2 f^{i+1}_1 & \mbox{if}~ 1 \leq i < n \, \mbox{and} \,t = k_1
\\
- \lambda_1 \lambda_n f^1_1 & \mbox{if}~ i=n \, \mbox{and} \, t= k_n,
\end{cases}
\end{equation}
and hence
\begin{equation}\label{eq: U 1221 matrix}
U_{12} U_{21} = \Big[- \lambda_1 \lambda_n; J_{k_1 - 1}(-\lambda_1^2), J_{k_2}(-\lambda_2^2), \ldots, J_{k_n}(-\lambda_n^2)\Big]: \ran P^\perp \rightarrow \ran P^\perp,
\end{equation}
is a weighted shift matrix (see \eqref{eqn:weighted matrix 2}). Next, since \[
U_{12} = Q_{\ran P^\perp} U|_{\ran P}, \text{ and } U_{21} = Q_{\ran P} U|_{\ran P^\perp},
\]
Lemma \ref{lemma: proj reducing} implies that
\begin{equation}\label{eq: U 1221}
U_{12} U_{21} (P^\perp \mathcal{S}) \subseteq P^\perp \mathcal{S}.
\end{equation}
Now we consider the spectral decomposition of the diagonal operator $U_{11}$ as
\[
\ran P^{\perp} = \bigoplus_{i \in \Lambda}\Big(E_{\sqrt{1-\lambda_i^2}}(U_{11})\Big).
\]
By $P^\perp \cls \neq \{0\}$, we have a non-zero $x \in P^\perp \cls \subseteq \ran P^\perp$. Suppose $x = \oplus_{i \in \Lambda} x_i$, where
\[
x_i \in E_{\sqrt{1-\lambda_i^2}}(U_{11}) \qquad (i \in \Lambda).
\]
Since $\cls$ reduces $(U,P)$ and $U_{11} = P_{\ran P^\perp} U|_{\ran P^\perp}$, it follows from Lemma \ref{lemma: proj reducing} that
\[
U_{11}(P^\perp \cls) = P_{\ran P^\perp} U (P^\perp \cls) \subseteq P_{\ran P^\perp} \cls = P^\perp \cls,
\]
that is, $P^\perp \cls$ is invariant under $U_{11}$. Since $U_{11}$ is an invertible diagonal operator, we are exactly in the setting of Lemma \ref{lemma: diagonal} and hence
\[
x_i \in P^\perp \cls \qquad (i \in \Lambda).
\]
Now since $x \neq 0$, there exists $j \in \Lambda$ such that $x_j \neq 0$.

\bigskip
	
\noindent \textbf{Claim:} Either $f^{j+1}_1$ or $f^1_1$ is in $P^\perp \cls$.
\smallskip

\noindent To prove this claim, let us first represent $x_j$ with respect to $\{f^j_t : t =1, \ldots, k_j\} \in B_{E_{\sqrt{1-\lambda_j^2}}(U_{11})}$ as
\[
x_j = \displaystyle\sum_{t = 1}^{k_j} \alpha_t f^j_t.
\]
Let $t_0$ be the largest value of $t$, $1 \leq t \leq k_j$, such that $\alpha_{t_0} \neq 0$, and let
\[p = k_j - t_0 +1.\] Then
\[x_j = \displaystyle\sum_{t = 1}^{t_0} \alpha_t f^j_t,
\]
and hence
\begin{align*}
(U_{12}U_{21})^{p}(x_j) &= \displaystyle\sum_{t = 1}^{t_0} \alpha_t (U_{12}U_{21})^{p}(f^j_t) \\ &= \displaystyle\sum_{t = 1}^{t_0 - 1} \alpha_t (U_{12}U_{21})^{p}(f^j_t) + \alpha_{t_0} (U_{12}U_{21})^{p}(f^j_{t_0})\\ &= y + z,
\end{align*}
where
\begin{equation}\label{yz}
y = \displaystyle\sum_{t = 1}^{t_0 - 1} \alpha_t (U_{12}U_{21})^{p}(f^j_t), \text{ and } z = \alpha_{t_0} (U_{12}U_{21})^{p}(f^j_{t_0}).
	\end{equation}
Recall from \eqref{eq: U 1221 matrix} that $U_{12} U_{21}$ is a weighted shift matrix. Then, by Lemma \ref{lemma: weighted matrix - diagonal}, we see that
\[
(U_{12} U_{21})^p = \begin{bmatrix}
0 & D_p
\\
D_{m-p} & 0
\end{bmatrix},
\]
where $m = \dim (\ran P^{\perp})$ and $D_p, D_{m-p}$ are respectively $p \times p$ and $(m-p) \times (m-p)$ diagonal matrices with nonzero diagonal entries. In view of the action of $U_{12}U_{21}$ on basis elements as in \eqref{formula U12U21} and keeping in mind that $p = k_j - t_0 +1$, it follows that
\begin{align*}
(U_{12}U_{21})^p(f^j_t) = \begin{cases}
\gamma_t f^j_{t + k_j - t_0 +1} \in E_{\sqrt{1-\lambda_j^2}}(U_{11}) & \mbox{ if } t < t_0
\\
\gamma_{t_0} f^{j+1}_1 \in E_{\sqrt{1-\lambda_{j+1}^2}}(U_{11}) & \mbox{ if } t = t_0 \mbox{ and } j < n
\\
\gamma f^1_1 \in E_{\sqrt{1-\lambda_1^2}}(U_{11}) &  \mbox{ if } t = t_0 \mbox{ and } j = n 		
\end{cases}
\end{align*}
for some non-zero scalars $\gamma_t$, $\gamma_{t_0}$ and $\gamma$. Consequently, it follows from Equation \eqref{yz} that
\[
y \in E_{\sqrt{1-\lambda_j^2}}(U_{11}),
\]
and there exists a non-zero scalar $\tilde{\gamma}$ such that
\[
z = \tilde{\gamma} f^{j+1}_1 \text{ or } \tilde{\gamma} f^1_1,
\]
according as $j < n$ or $j = n$. The above equality ensures that $z \in E_{\sqrt{1-\lambda_{j+1}^2}}(U_{11})$ or $z \in E_{\sqrt{1-\lambda_1^2}}(U_{11})$ according as $j < n$ or $j = n$. In summary
\[
(U_{12}U_{21})^{p}(x_j) = y + z,
\]
with $y \in E_{\sqrt{1-\lambda_j^2}}(U_{11})$, and $z \in E_{\sqrt{1-\lambda_{j+1}^2}}(U_{11})$ or $z \in E_{\sqrt{1-\lambda_1^2}}(U_{11})$ according as $j < n$ or $j = n$. Now since $P^{\perp} \mathcal{S}$ is invariant under $U_{11}$, Lemma 2.6 yields
\[
y \in P^{\perp} \mathcal{S} \mbox{ and } z \in P^{\perp} \mathcal{S}.
\]
Since $z$ is a non-zero scalar multiple of $f^{j+1}_1$ or $f^1_1$ according as $j < n$ or $j = n$, it follows that either $f^{j+1}_1$ or $f^1_1$ is in $P^{\perp} \mathcal{S}$ depending on whether $j < n$ or $j = n$. This completes the proof of the claim.

\smallskip

\noindent Since $U_{12} U_{21}$ is a weighted shift matrix corresponding to $\cup_{i \in \Lambda} \{f^i_t: 1 \leq t \leq k_i\} \in B_{\ran P^\perp}$ (see part (3) of Lemma \ref{lemma: main fd}), it follows from Lemma \ref{shift} that for any $i \in \Lambda$ and any $t$, $1 \leq t \leq k_i$, $f^i_t$ is a cyclic vector for $U_{12} U_{21}$, that is, for any $i \in \Lambda$ and any $t$ with $1 \leq t \leq k_i$,
 \[
 \ran P^\perp = \text{span} \{(U_{12} U_{21})^m f^i_t: m \geq 0\}.
 \]
Since either $f^1_1$ or $f^{j+1}_1$ is in $P^\perp \cls$ we finally obtain that $\ran P^\perp = P^\perp \cls$. The proof for the case $P \cls \neq \{0\}$ is similar.
\end{proof}

\newsection{Diagonals with one positive eigenvalue}\label{section-finite dim 2}

In this section, we focus on all the remaining cases of distinguished diagonal operators on finite-dimensional Hilbert spaces. As we will see, Corollary \ref{equ} and Theorem \ref{thm: main 1}, together with the main result of this section will settle Question \ref{Q1} completely in the case of finite-dimensional Hilbert spaces.

We continue to follow the setting of Lemma \ref{lemma: main fd}.


\begin{theorem}\label{main 2}
Let $\mathcal{E}$ be a finite-dimensional Hilbert space, $T \in \clb(\cle)$ be a distinguished diagonal operator, and assume that $\mbox{dim} E_1(T) = \mbox{dim} E_{-1}(T)$. Then Question \ref{Q1} has an affirmative answer whenever at least one of the following two hypotheses hold:
\begin{enumerate}
  \item $T$ has only one positive eigenvalue in $(0, 1)$.
  \item $1$ is the only positive eigenvalue of $T$ with $\mbox{dim}E_1(T) = 1$.
\end{enumerate}
Moreover, if $1$ is the only positive eigenvalue of $T$ with $\mbox{dim}E_1(T) > 1$, then the answer to Question \ref{Q1} is negative.
\end{theorem}
\begin{proof}
Suppose that $T$ has only one eigenvalue $\lambda$ lying in $(0, 1)$. Then
\[
\sigma(T) = \{\pm \lambda\},
\]
and $\mbox{dim} E_{\lambda}(T) = \mbox{dim} E_{-\lambda}(T)$. Moreover
\[
\cle = E_{-\lambda}(T) \oplus E_{\lambda}(T).
\]
Assume that $\mbox{dim} E_{\lambda}(T) = 1$. Then $\text{dim} \cle = 2$, and hence the distinguished diagonal operator $T \in \clb(\cle)$ has two eigenvalues $\pm \lambda$. Consequently, the simple block constructed in \cite[Example 6.6]{HQY2015} yields an irreducible BCL pair $(M_{\Phi_1}, M_{\Phi_2})$ on $H^2_{\mathbb{C}^2}(\mathbb{D})$ such that the non-zero part of $C(M_{\Phi_1}, M_{\Phi_2})$ is unitarily equivalent to $T$. Let us now assume that
\[
k := \mbox{dim} E_{\lambda}(T) = \mbox{dim} E_{-\lambda}(T) \geq 2.
\]
Let $\{e_t: t = 1, \ldots, k\} \in B_{E_{\lambda}(T)}$, and let $U_\lambda: E_{\lambda}(T) \rightarrow E_{-\lambda}(T)$ is a unitary. Set
\[
f_t := \sqrt{\frac{1+\lambda}{2}} e_t \oplus \sqrt{\frac{1-\lambda}{2}} U_{\lambda} e_t, \text{ and }
\tilde{f}_t := \sqrt{\frac{1-\lambda}{2}} e_t \oplus \Big(- \sqrt{\frac{1+\lambda}{2}}\Big) U_{\lambda}e_t,
\]
for all $t = 1, \ldots, k$. By Lemma \ref{lemma: main fd}, we have
\[
\{\sqrt{1-\lambda^2} f_t \oplus (- \lambda) \tilde{f}_t, \; \lambda f_t \oplus \sqrt{1-\lambda^2} \tilde{f}_t : t =1, \ldots, k\} \in B_{\mathcal{E}},
\]
and $P$ defines a projection on $E_{\lambda}(T) \oplus E_{-\lambda}(T)$, where
\[
P = I_{E_{\lambda}(T) \oplus E_{-\lambda}(T)} - \left[ \begin{array}{cc}
\frac{1+\lambda}{2}I_{E_{\lambda}(T)} & \frac{\sqrt{1-\lambda^2}}{2} U_{\lambda}^*
\\
\frac{\sqrt{1-\lambda^2}}{2} U_{\lambda} & \frac{1-\lambda}{2}I_{E_{-\lambda}(T)}
\end{array}
\right].
\]
Also we know that $\{\tilde{f}_t: t=1, \ldots, k\} \in B_{\ran P}$, and $\{f_t: t=1, \ldots, k\} \in B_{\ran P^{\bot}}$, and
\[
T f_t = \lambda^2 f_t + \lambda \sqrt{1 - \lambda^2} \tilde{f}_t, \text{ and } T \tilde{f}_t = \lambda_i \sqrt{1 - \lambda^2} f_t
- \lambda^2 \tilde{f}_t,
\]
for all $t =1, \ldots, k$. With the projection $P$ as above, we now proceed to construct the required unitary $U: \mathcal{E} \rightarrow \mathcal{E}$. Let $\alpha (\neq 1)$ be a unimodular scalar. Define $U$ on $\ran P^\perp$ by
\[
U {f}_t =
\begin{cases} \alpha((\sqrt{1 - \lambda^2}) f_1 \oplus (- \lambda) \tilde{f}_1) & \mbox{if}~ t =1,
\\
(\sqrt{1 - \lambda^2}) f_t \oplus (- \lambda) \tilde{f}_t & \mbox{if}~ 2 \leq t \leq k,
\end{cases}
\]
and on $\ran P$ by
\[
U \tilde{f}_t =
\begin{cases} \lambda f_{t+1} \oplus (\sqrt{1 - \lambda^2}) \tilde{f}_{t+1} & \mbox{if}~ 1 \leq t < k,
\\
\lambda f_1 \oplus (\sqrt{1 - \lambda^2}) \tilde{f}_1 & \mbox{if}~ t =k.
\end{cases}
\]
As in the proof of Theorem \ref{thm: main 1}, with respect to $\mathcal{E} = \ran\, P^{\bot} \oplus \ran\, P$, set
\begin{equation*}
U = \left[ \begin{array}{cc}
U_{11} & U_{12} \\
U_{21} & U_{22}\\
\end{array}
\right].
\end{equation*}
Then, with respect to $\{\tilde{f}_t: t=1, \ldots, k\} \in B_{\ran P}$ and $\{f_t: t=1, \ldots, k\} \in B_{\ran P^{\bot}}$, it is easy to see that $U_{11} = Q_{\ran P^{\bot}} U|_{\ran P^{\bot}}: \ran P^{\perp} \rightarrow \ran P^{\perp}$ and $U_{21} = Q_{\ran P} U|_{\ran P^{\bot}}: \ran P^{\perp} \rightarrow \ran P$ are diagonal operators with
\[
[U_{11}] = \mbox{diag} \Big(\alpha \sqrt{1-\lambda^2}, \; \Big[\sqrt{1-\lambda^2}\Big]_{k-1}\Big),
\]
and
\[
[U_{21}] = \mbox{diag} \Big(-\alpha \lambda, \; \Big[-\lambda\Big]_{k-1}\Big).
\]
Similarly, $U_{12} = Q_{\ran P^{\perp}} U|_{\ran P}: \ran P \rightarrow \ran P^{\perp}$ and $U_{22} = Q_{\ran P}U|_{\ran P}: \ran P \rightarrow \ran P$ are weighted shift matrices, where
\[
[U_{12}] = \Big[\lambda; J_{k-1}(\lambda)\Big],
\]
and
\[
[U_{22}] = \Big[\sqrt{1-\lambda^2}; J_{k-1}(\sqrt{1-\lambda^2})\Big].
\]
Then the verification of $P^{\perp} - U P^{\perp} U^* = T$, along with the irreducibility of the BCL triple $(\cle, U, P)$, follows exactly the same line of argument as in the proof of Theorem \ref{thm: main 1}. This completes the proof for $\lambda \in (0,1)$ case.

\noindent Now we assume that $\lambda = 1$. We know that $\mathcal{E} = E_{-1}(T) \oplus E_{1}(T)$ and
\begin{equation}\label{eqn: T lambda1}
T = \left[ \begin{array}{cc}
I_{E_{-1}(T)} & 0 \\
0 & -I_{E_{1}(T)}\\
\end{array}
\right].
\end{equation}
For the moment, assume that $T =P^{\perp} - UP^{\perp}U^*$ for some unitary $U$ and projection $P$ on $\mathcal{E}$. Then Theorem \ref{struc} immediately implies that
\begin{equation*}
P^{\perp} = \left[ \begin{array}{cc}
I_{E_{-1}(T)} & 0 \\
0 & 0\\
\end{array}
\right],
 \  \ \mbox{ and } \  \
UP^{\perp}U^* = \left[ \begin{array}{cc}
0 & 0 \\
0 & I_{E_{1}(T)}\\
\end{array}
\right],
\end{equation*}
on $\cle = E_1(T) \oplus E_{-1}(T)$. It is clear from the representations of $P^{\perp}$ and $UP^{\perp}U^*$ that $U E_1(T) = E_{-1}(T)$ and $U E_{-1}(T) = E_1(T)$. Consequently, there exist unitaries $A: E_{1}(T) \rightarrow E_{-1}(T)$ and $B: E_{-1}(T) \rightarrow E_{1}(T)$ such that
\begin{equation*}
U = \left[ \begin{array}{cc}
0 & A \\
B & 0\\
\end{array}
\right].
\end{equation*}
Therefore, given a distinguished diagonal operator $T \in \clb(\cle)$ as in \eqref{eqn: T lambda1}, a BCL triple $(\cle, U, P)$ solves $T =P^{\perp} - UP^{\perp}U^*$ if and only if there exist unitaries $A \in \clb(E_{1}(T), E_{-1}(T))$ and $B \in \clb(E_{-1}(T), E_{1}(T))$ such that
\[
U = \left[ \begin{array}{cc}
0 & A \\
B & 0\\
\end{array}
\right], \text{ and } P = \left[ \begin{array}{cc}
0 & 0 \\
0 & I_{E_{1}(T)}\\
\end{array}
\right].
\]
Thus, we consider a BCL triple $(\cle, U, P)$ with $U$ and $P$ as above. Suppose
\[
1 = \mbox{dim} E_{-1}(T) = \mbox{dim} E_{1}(T).
\]
In particular, $\cle \cong \mathbb{C}^2$. The only non-trivial proper $P$-reducing subspaces of $\cle$ are $E_{1}(T)$ and $E_{-1}(T)$. But neither of these is invariant under $U$, and hence $(\cle, U, P)$ is irreducible. Now we assume that
\[
\mbox{dim} E_{-1}(T) = \mbox{dim} E_{1}(T) \geq 2.
\]
Let $\alpha \in \sigma(U)$, and let $x \in \mathcal{E}$ be an eigenvector corresponding to $\alpha$, that is, $Ux = \alpha x$. Note that $\alpha$ is a unimodular scalar. Write $x= x_1 \oplus x_2 \in E_{-1}(T) \oplus E_{1}(T)$. Then
\[
Ax_2 = \alpha x_1 \quad \mbox{and} \quad Bx_1 = \alpha x_2.
\]
Consider the subspace $\cls = \mbox{span} \{x_1, x_2\}$. Since $A$ and $B$ are unitaries, it follows that $\cls$ reduces both $U$ and $P$. Finally, $\text{dim} \cle \geq 4$ implies that $\cls$ is a proper non-trivial subspace of $\cle$ and this completes the proof of the theorem.
\end{proof}

We summarize all the results obtained so far for finite-dimensional Hilbert spaces (Corollary \ref{equ}, Theorem \ref{thm: main 1}, and Theorem \ref{main 2}) in the following theorem.

\begin{theorem}\label{main}
Let $\mathcal{E}$ be a finite-dimensional Hilbert space and let $T \in \clb(\cle)$ be a distinguished diagonal operator. If
\[
\mbox{dim}\, E_1(T) \neq \mbox{dim}\, E_{-1}(T),
\]
then it is not possible to find a BCL pair $(M_{\Phi_1}, M_{\Phi_2})$ on $H^2_{\cle}(\mathbb{D})$ such that the non-zero part of $C(M_{\Phi_1}, M_{\Phi_2})$ is unitarily equivalent to $T$. If
\[
\mbox{dim} E_1(T) = \mbox{dim} E_{-1}(T),
\]
then there exists an irreducible BCL pair $(M_{\Phi_1}, M_{\Phi_2})$ on $H^2_{\cle}(\mathbb{D})$ such that $C(M_{\Phi_1}, M_{\Phi_2})|_{\cle} = T$ whenever at least one of the following three hypotheses hold:
\begin{enumerate}
\item $T$ has at least two distinct positive eigenvalues.
  \item $T$ has only one positive eigenvalue in $(0, 1)$.
  \item $1$ is the only positive eigenvalue of $T$ with $\mbox{dim}E_1(T) = 1$.
\end{enumerate}
Moreover, if $1$ is the only positive eigenvalue of $T$ and
\[
\mbox{dim} E_1(T) > 1,
\]
then it is not possible to find any irreducible BCL pair $(M_{\Phi_1}, M_{\Phi_2})$ on $H^2_{\cle}(\mathbb{D})$ such that the non-zero part of $C(M_{\Phi_1}, M_{\Phi_2})$ is unitarily equivalent to $T$.
\end{theorem}

Therefore, Theorem \ref{main} settles Question \ref{Q1} completely in the finite-dimensional case.

\section{Diagonals on infinite-dimensional spaces}\label{section-infinite dim}

In this section, we analyse Question \ref{Q1} for distinguished diagonal operators acting on infinite-dimensional Hilbert spaces. As we have seen in Corollary \ref{equ} (or see Theorem \ref{main}), for a finite-dimensional Hilbert space $\mathcal{E}$ and a distinguished diagonal operator $T \in \clb(\cle)$, if
\[
\mbox{dim}\, E_1(T) - \mbox{dim}\, E_{-1}(T) \neq 0,
\]
then Question \ref{Q1} is always negative. Here, however, at the other extreme, we prove that if $\cle$ is an infinite-dimensional space, then under the above assumption Question \ref{Q1} still could be affirmative. For instance, we prove that if $T$ is a distinguished diagonal operator acting on an infinite-dimensional Hilbert space $\cle$, and if
\[
|\mbox{dim} E_1(T) - \mbox{dim} E_{-1}(T)| \leq 1,
\]
then Question \ref{Q1} is always in the affirmative.

We begin by showing that Question \ref{Q1} is in the affirmative whenever $\mbox{dim} E_1(T) = \mbox{dim} E_{-1}(T)$ (\mbox{may be zero also}). Part of the proof proceeds along the lines of the proof of Theorem \ref{thm: main 1}. Those similarities will be pointed out and subsequently omitted in what follows.

\begin{theorem}\label{inf}
Let $\cle$ be an infinite-dimensional Hilbert space, and let $T \in \clb(\cle)$ be a distinguished diagonal operator. If
\[
\mbox{dim} E_1(T) = \mbox{dim} E_{-1}(T),
\]
then there exists an irreducible BCL pair $(M_{\Phi_1}, M_{\Phi_2})$ on ${H}^2_{\mathcal{E}}(\D)$ such that $C(M_{\Phi_1}, M_{\Phi_2})|_{\cle} = T$.
\end{theorem}
\begin{proof}
Let $\sigma(T) \cap (0,1] = \{\lambda_n : n \in \mathbb{Z}\}$. Since $\mbox{dim} E_1(T) = \mbox{dim} E_{-1}(T)$, it follows that $\sigma(T) = \{ \pm \lambda_n : n \in \mathbb{Z}\}$ (see \eqref{eqn - plus minus lambda}), and
\[
k_n:= \mbox{dim} E_{\lambda_n}(T) = \mbox{dim} E_{-\lambda_n}(T) < \infty \qquad (n \in \mathbb{Z}).
\]
It also follows that
\[
\mathcal{E} = \bigoplus_{n \in \mathbb{Z}}\Big(E_{\lambda_n}(T) \oplus E_{-\lambda_n}(T)\Big).
\]
Fix $n \in \mathbb{N}$ and $\{e^n_t: 1 \leq t \leq k_n \} \in B_{E_{\lambda_n}(T)}$. Then, as in Lemma \ref{lemma: main fd}, there exists a unitary $U_n: E_{\lambda_n}(T) \rightarrow E_{-\lambda_n}(T)$ such that $\{U_n e^n_t: 1 \leq t \leq k_n \} \in B_{E_{-\lambda_n}(T)}$, and
\begin{equation*}
Q_n = \left[ \begin{array}{cc}
\frac{1+\lambda_n}{2}I_{E_{\lambda_n}(T)} & \frac{\sqrt{1-\lambda_n^2}}{2} U_n^*
\\
\frac{\sqrt{1-\lambda_n^2}}{2} U_n & \frac{1-\lambda_n}{2}I_{E_{-\lambda_n}(T)}
\end{array}
\right],
\end{equation*}
defines a projection on $E_{\lambda_n}(T) \oplus E_{-\lambda_n}(T)$. Moreover, $\{f^n_t : 1 \leq t \leq k_n \} \in B_{\ran\, Q_n}$ and $\{\tilde{f}^n_t : 1 \leq t \leq k_n \} \in B_{\ran\, Q_n^{\perp}}$, where
\[
f^n_t := \sqrt{\frac{1+\lambda_n}{2}} e^n_t \oplus \sqrt{\frac{1-\lambda_n}{2}} U_n e^n_t, \text{ and }
\tilde{f}^n_t := \sqrt{\frac{1-\lambda_n}{2}} e^n_t \oplus \Big(-\sqrt{\frac{1+\lambda_n}{2}}\Big) U_n e^n_t,
\]
for all $t=1, \ldots, k_n$. Consider the projection $P := (\oplus_{n \geq 1} Q_n)^\perp \in \clb(\mathcal{E})$. It follows that
\begin{equation}\label{eqn: two unions bases}
\cup_{n \in \mathbb{Z}} \{f^n_t: 1 \leq t \leq k_n \} \in B_{\ran P^{\perp}}, \text{ and } \cup_{n \in \mathbb{Z}} \{\tilde{f}^n_t: 1 \leq t \leq k_n \} \in B_{\ran P}.
\end{equation}
Define the unitary $U: \mathcal{E} \rightarrow \mathcal{E}$ by specifying
\[
U(f^n_t) = \sqrt{1 - \lambda_n^2} f^n_t \oplus (- \lambda_n) \tilde{f}^n_t,
\]
and
\[
U(\tilde{f}^n_t) =
\begin{cases} \lambda_n f^n_{t+1} \oplus \sqrt{1 - \lambda_n^2} \tilde{f}^n_{t+1} & \mbox{if}~ 1 \leq t < k_n
\\
\lambda_{n+1} f^{n+1}_1 \oplus \sqrt{1 - \lambda_{n+1}^2} \tilde{f}^{n+1}_1 & \mbox{if}~ t = k_n,
\end{cases}
\]
for all $1 \leq t \leq k_n$ and $n\geq 1$. It is easy to see that $P^{\perp} - U P^{\perp} U^* = T$. Suppose
\[
U = \left[ \begin{array}{cc}
U_{11} & U_{12} \\
U_{21} & U_{22}\\
\end{array}
\right],
\]
with respect to $\mathcal{E} = \ran P^{\bot} \oplus \ran P$. It is clear from the definition of $U$ that with respect to the orthonormal bases of $\ran P^{\perp}$ and $\ran P$ as in \eqref{eqn: two unions bases}, the matrices of  $U_{11} : \ran P^{\perp} \to \ran P^{\perp}$ and $U_{21} : \ran P^{\perp} \to \ran P$ are diagonal:
\begin{equation}\label{U11U21}
[U_{11}] = \mbox{diag} \big([\sqrt{1-\lambda_n^2}]_{k_n} \big)_{n \in \mathbb{Z}}, \text{ and } [U_{21}] = \mbox{diag} \big([-\lambda_n]_{k_n} \big)_{n \in \mathbb{Z}},
\end{equation}
and $U_{12} U_{21} : \ran P^{\perp} \to \ran P^{\perp}$ is a weighted shift defined by
\begin{align*}
U_{12}U_{21}(f^n_t) = \begin{cases}
-\lambda_n^2 f^n_{t+1} \ &\mbox{if} \ 1 \leq t < k_n\\
-\lambda_n \lambda_{n+1} f^{n+1}_1 \  &\mbox{if} \ t = k_n.
\end{cases}
\end{align*}
Now let $\mathcal{S}$ be a non-zero closes subspace of $\cle$. Assume that $\cls$ reduces $(U, P)$. In particular, $\cls$ reduces $P$, and hence, we may write
\[
\cls = P^{\perp}\mathcal{S} \oplus P\mathcal{S}.
\]
Assume, without loss of generality, that $P^{\perp}\mathcal{S} \neq \{0\}$. It is enough to prove that $\cls = \cle$ (as the $P \mathcal{S} \neq \{0\}$ case would follow similarly). However:

\smallskip

\noindent \textsf{Claim:} If $P^{\perp}\mathcal{S} = \ran P^{\perp}$, then $\cls = \cle$.

\noindent To prove the claim we assume that $P^{\perp}\mathcal{S} = \ran P^{\perp}$. Then $U_{21}(P^{\perp}\mathcal{S}) \subseteq P \mathcal{S}$ (see \eqref{eq: U21 P perp S} in the proof of Theorem \ref{thm: main 1}) and the matrix representation of $U_{21}$ imply that
\[
\cup_{n \in \mathbb{Z}}\{\tilde{f}^n_t : 1 \leq t \leq k_n\} \subseteq P \mathcal{S}.
\]
On the other hand, since $\cup_{n \in \mathbb{Z}}\{\tilde{f}^n_t : 1 \leq t \leq k_n\} \in B_{\ran P}$ and $P \mathcal{S}$ is a closed subspace of $\ran P$, it follows that $\ran P = P \mathcal{S}$. Then $\cls = \ran P^\perp \oplus \ran P = \cle$, from which the claim follows immediately.

\smallskip

\noindent Therefore, all we need to check is the fact that
\[
P^{\perp}\mathcal{S} = \ran P^{\perp}.
\]
Again, as in the proof of Theorem \ref{thm: main 1} (see \eqref{eq: U 1221 matrix} and \eqref{eq: U 1221}), since $P^{\perp}\mathcal{S}$ reduces $U_{12} U_{21}$, and $U_{12} U_{21}$ is a weighted shift on $\ran P^{\perp}$, Lemma \ref{shift} would prove the above equality if we can show that $f^{n}_1 \in P^{\perp}\mathcal{S}$ for some $n$. To this end, consider a non-zero vector $x \in P^{\perp}\mathcal{S}$. As $U_{11} \in \clb(\ran P^{\perp})$ is diagonalizable with eigenvalues $\{\sqrt{1-\lambda_{n}^2} : n \in \mathbb{Z}\}$, it follows that
\[
\ran P^{\perp} = \oplus_{n \in \mathbb{Z}} E_{\sqrt{1-\lambda_n^2}}(U_{11}),
\]
and hence $x = \sum_{n \in \mathbb{Z}} x_n$, where
\[
x_n \in E_{\sqrt{1-\lambda_n^2}}(U_{11}).
\]
Since $P^{\perp}\mathcal{S}$ reduces $U_{11}$, Lemma \ref{lemma: diagonal} yields that $x_n \in P^{\perp}\mathcal{S}$ for all $n \in \mathbb{Z}$. Choose $m$ such that $x_m \neq 0$ and let
\[
x_m = \sum_{t = 1}^{k_m} \alpha_t f^m_t.
\]
If $t_0 = \max \{t: \alpha_{t} \neq 0, 1 \leq t \leq k_m\}$, then a similar argument as in the proof of Theorem \ref{thm: main 1} shows that
\[
(U_{12} U_{21})^{k_m - t_0 +1} f^m_s \in E_{\sqrt{1-\lambda_m^2}}(U_{11}),
\]
for all $s < t_0$, and, there exists a non-zero scalar $\alpha$ such that
\[
(U_{12} U_{21})^{k_m - t_0 +1}(f^m_{t_0}) = \alpha f^{m+1}_1.
\]
Then we conclude, proceeding again along the same line of argument as in the proof of Theorem \ref{thm: main 1}, that $f^{m+1}_1 \in P^{\perp}\mathcal{S}$. Since the proof of the other case that $P \cls \neq \{0\}$ is also similar, this completes the proof.
\end{proof}

Now we prove that  Question \ref{Q1} is in the affirmative whenever $|\mbox{dim} E_1(T) - \mbox{dim} E_{-1}(T)| = 1$. However, unlike the above theorem (and except for the general idea), the proof of the present case is different from that of Theorem \ref{thm: main 1}. In other words, the irreducible BCL triple constructed below is fairly different from those constructed in Theorem \ref{main} and Theorem \ref{inf} above and requires more effort.

\begin{theorem}\label{diff1}
Let $\cle$ be an infinite-dimensional Hilbert space, and let $T \in \clb(\cle)$ be a distinguished diagonal operator. If
\[
\mbox{dim}\, E_1(T) = \mbox{dim}\, E_{-1}(T) \pm 1,
\]
then there exists an irreducible BCL pair $(M_{\Phi_1}, M_{\Phi_2})$ on ${H}^2_{\mathcal{E}}(\D)$ such that $C(M_{\Phi_1}, M_{\Phi_2})|_{\cle} = T$.
\end{theorem}
\begin{proof}
Assume, without loss of generality, that $\mbox{dim}\, E_{-1}(T) = \mbox{dim}\, E_1(T) + 1$. Further, assume that $\mbox{dim}\, E_1(T) > 0$, that is, $\lambda_0:= 1 \in \sigma(T)$, and set $\sigma(T) \cap (0, 1) = \{\lambda_n : n \geq 1 \}$. Then $\sigma(T) = \{\pm \lambda_n : n \geq 0 \}$. Also, set $k_0=\mbox{dim}\, E_1(T)$ so that	
\[
\mbox{dim}\, E_{-1}(T) = k_0 + 1,
\]
and let $\{f^0_t : 1 \leq t \leq k_0 \} \in B_{E_1(T)}$ and $\{\tilde{f}^0_t : 1 \leq t \leq k_0 + 1 \} \in B_{E_{-1}(T)}$. For each $n \geq 1$, we use the same notations used in the proof of Theorem \ref{inf}: $k_n := \mbox{dim}\, E_{\lambda_n}(T)$, $U_n: E_{\lambda_n}(T) \rightarrow E_{-\lambda_n}(T)$ is a unitary, $\{e^n_t : 1 \leq t \leq k_n \} \in B_{E_{\lambda_n}(T)}$, and
\[
f^n_t = \sqrt{\frac{1 + \lambda_n}{2}} e^n_t \oplus \sqrt{\frac{1 - \lambda_n}{2}} U_n e^n_t, \text{ and } \tilde{f}^n_t = \sqrt{\frac{1 - \lambda_n}{2}} e^n_t \oplus \Big(- \sqrt{\frac{1 + \lambda_n}{2}} U_n e^n_t\Big)
\]
for all $1 \leq t \leq k_n$. For notational convenience, we let
\[
\clf = \displaystyle\cup_{m \geq 0} \{f^m_t : 1 \leq t \leq k_m\}, \text{ and } \tilde\clf = \cup_{n \geq 1} \{\tilde{f}^n_t : 1 \leq t \leq k_n \Big \} \cup \{ \tilde{f}^0_t : 1 \leq t \leq k_0 + 1\}.
\]
Note that our goal is to construct an irreducible BCL triple $(\cle, U, P)$ such that $P^{\perp} - U P^{\perp} U^* = T$. Clearly
\[
\clf \cup \tilde \clf \in B_{\cle}.
\]
We simply consider the projection $P \in \clb(\cle)$ such that $\clf \in B_{\ran P^{\perp}}$ and $\tilde \clf \in B_{\ran P}$. The construction of $U$ on $\cle$, however, needs more care. We proceed as follows: On $\clf$, define $U$ as
\[
U f^n_t =
\begin{cases}
\tilde{f}^0_{t+1} & \text{if } n=0 \text{ and } 1 \leq t \leq k_0
\\
\sqrt{1 - \lambda_n^2}f^n_{t+1} \oplus (- \lambda_n) \tilde{f}^n_{t+1} & \text{if } n \geq 1 \text{ and } 1 \leq t < k_n
\\
\sqrt{1 - \lambda_{n-1}^2}f^{n-1}_1 \oplus (-\lambda_{n-1}) \tilde{f}^{n-1}_1 & \text{if } n \geq 1 \text{ and } t = k_n,
\end{cases}
\]
and on $\tilde\clf$, we define
\[
U \tilde{f}^n_t =
\begin{cases}
f^0_t & \text{if } n=0 \text{ and } 1 \leq t \leq k_0
\\
\lambda_1 f^1_1 + \sqrt{1 - \lambda_1^2}\tilde{f}^1_1 & \text{if } n=0 \text{ and } t = k_0+1
\\
\lambda_n f^n_{t+1} \oplus \sqrt{1 - \lambda_n^2}  \tilde{f}^n_{t+1} & \text{if } n \geq 1 \text{ and } 1 \leq t < k_n
\\
\lambda_{n+1}f^{n+1}_1 \oplus \sqrt{1 - \lambda_{n+1}^2} \tilde{f}^{n+1}_1 & \text{if } n \geq 1 \text{ and } t = k_n.
\end{cases}
\]
It is now clear from the definition of $U$ and $P$ that $P^{\perp} - U P^{\perp} U^* = T$. Suppose
\begin{equation*}
U = \left[ \begin{array}{cc}
U_{11} & U_{12} \\
U_{21} & U_{22}\\
\end{array}
\right],
\end{equation*}
on $\mathcal{E} = \ran P^{\perp} \oplus \ran P$. Since $U_{11} = Q_{\ran P^{\perp}} U|_{\ran P^{\perp}}$, we have
\[
U_{11} f^n_t =
\begin{cases}
0 & \text{if } n=0 \text{ and } 1 \leq t \leq k_0
\\
\sqrt{1 - \lambda_n^2} f^n_{t+1} & \text{if } n \geq 1 \text{ and } 1 \leq t < k_n
\\
\sqrt{1 - \lambda_{n-1}^2} f^{n-1}_1 & \text{if } n \geq 1 \text{ and } t = k_n,
\end{cases}
\]
and hence
\[
U_{11}^* U_{11} f^n_t =
\begin{cases}
0 & \text{if } n=0 \text{ and } 1 \leq t \leq k_0
\\
(1 - \lambda_n^2) f^n_{t} & \text{if } n \geq 1 \text{ and } 1 \leq t < k_n
\\
(1 - \lambda_{n-1}^2) f^{n}_{k_n} & \text{if } n \geq 1 \text{ and } t = k_n.
\end{cases}
\]
In particular, $U_{11}^* U_{11}$ is a diagonalizable operator with $\sigma(U_{11}^* U_{11}) = \{1 - \lambda_n^2\}_{n \geq 0}$. Therefore 
\[
\{f^0_t : 1 \leq t \leq k_0 \} \cup \{f^1_{k_1} \} \in B_{E_0(U_{11}^* U_{11})} = B_{E_{1 - \lambda_0^2}(U_{11}^* U_{11})},
\]
and, for all $n \geq 1$, we have
\begin{equation}\label{basis of non-zero}
\{f^n_t : 1 \leq t < k_n \} \cup  \{f^{n+1}_{k_{n+1}} \} \in B_{E_{1 - \lambda_n^2}(U_{11}^* U_{11})}.
\end{equation}
Now we prove that $(U,P)$ is irreducible. Suppose $\mathcal{S} \subseteq \cle$ is a non-zero closed subspace, and suppose that $\cls$ reduces $(U, P)$. Then, as before, we write
\[
\mathcal{S} = P^{\perp}\mathcal{S} \oplus P\mathcal{S}.
\]
Assume, without loss of generality, that $P^{\perp}\mathcal{S} \neq \{0\}$ (as the other case that $P\mathcal{S} \neq \{0\}$ would follow similarly). Our goal is to show that $\mathcal{S} = \mathcal{E}$.

\smallskip

\noindent\textbf{Claim:} $f^n_t \in P^{\perp} \mathcal{S}$ for some $n \geq 1$ and $1 \leq t \leq k_n$.

\smallskip
	
\noindent\textit{Proof of the claim:} Pick a non-zero $x \in P^{\perp}\mathcal{S}$, and suppose $x = \mathop\oplus_{n \geq 0}x_n$, where $x_n \in E_{1 - \lambda_n^2}(U_{11}^* U_{11})$ for all $n \geq 0$. Since $P^{\perp}\mathcal{S}$ reduces $U_{11}^* U_{11}$, Lemma \ref{lemma: diagonal} implies (as in the proof of Theorem \ref{thm: main 1}) that $x_n \in P^{\perp}\mathcal{S}$, $n \geq 0$. Let $n_0$ be the smallest non-negative integer such that $x_{n_0} \neq 0$.

\noindent \textit{Case $1$.} Suppose $n_0 \geq 1$. Using the above orthonormal basis of $E_{1 - \lambda_{n_0}^2}(U_{11}^* U_{11})$, represent $x_{n_0}$ as
\[
x_{n_0} = \sum_{t = 1}^{k_{n_0} -1} \alpha^{n_0}_t f^{n_0}_t + \beta f^{n_0 +1}_{k_{n_0 + 1}},
\]
for some scalars $\alpha^{n_0}_t$ and $\beta$. If $\alpha^{n_0}_t = 0$ for all $t$ and $1 \leq t < k_{n_0}$, then clearly $\beta \neq 0$ and hence, $f^{n_0 +1}_{k_{n_0} + 1} \in \mathcal{S}_1$. Suppose $\alpha^{n_0}_t$ are not all zero. Let $t_0$ be the maximum value of $t, 1 \leq t \leq k_{n_0}-1$, such that $\alpha^{n_0}_t \neq 0$. Then
\[
x_{n_0} = \sum_{t = 1}^{t_0} \alpha^{n_0}_t f^{n_0}_t + \beta f^{n_0 +1}_{k_{n_0 + 1}}.
\]
Since $U_{11} (P^{\perp} \cls) \subseteq P^{\perp} \cls$, it follows that $U_{11}^{k_{n_0} - t_0}(x_{n_0}) \in P^{\perp}\mathcal{S}$. The action of $U_{11}$ on $\clf$ now yields
\[
U_{11}^{k_{n_0} - t_0} f^{m}_t =
\begin{cases}
(\sqrt{1 - \lambda_{n_0}^2})^{k_{n_0} - t_0} f^{n_0}_{k_{n_0}} & \text{if } m = n_0 \text{ and } t = t_0
\\
\gamma_t f^{n_0}_{k_{n_0} - t_0 + t} & \text{if } m = n_0 \text{ and } 1 \leq t < t_0
\\
\gamma_{k_{n_0+1}} f^{n_0}_{k_{n_0} - t_0} & \text{if } m = n_0+1 \text{ and } t = k_{n_0+1},
\end{cases}
\]
for some scalars $\gamma_t$ and $\gamma_{k_{n_0+1}}$. In particular, $U_{11}^{k_{n_0} - t_0} f^{n_0}_{t_0} = (\sqrt{1 - \lambda_{n_0}^2})^{k_{n_0} - t_0} f^{n_0}_{k_{n_0}}$, and
\[
U_{11}^{k_{n_0} - t_0} f^{n_0}_t, U_{11}^{k_{n_0} - t_0} f^{n_0 + 1}_{k_{n_0+1}} \in \mbox{span} \{f^{n_0}_1, \cdots, f^{n_0}_{k_{n_0} - 1}\},
\]
for $1 \leq t < t_0$. Then
\[
\begin{split}
U_{11}^{k_{n_0} - t_0} x_{n_0} & = \sum_{t = 1}^{t_0} \alpha^{n_0}_t U_{11}^{k_{n_0} - t_0} f^{n_0}_t + \beta U_{11}^{k_{n_0} - t_0} f^{n_0 +1}_{k_{n_0 + 1}}
\\
& = (\sum_{t = 1}^{t_0-1} \alpha^{n_0}_t U_{11}^{k_{n_0} - t_0} f^{n_0}_t + \beta U_{11}^{k_{n_0} - t_0} f^{n_0 +1}_{k_{n_0 + 1}}) + \alpha^{n_0}_{t_0} (\sqrt{1 - \lambda_{n_0}^2})^{k_{n_0} - t_0}f^{n_0}_{k_{n_0}} ,
\end{split}
\]
and, by \eqref{basis of non-zero}, it follows that
\[
U_{11}^{k_{n_0} - t_0} x_{n_0} \in \mbox{span} \{f^{n_0}_1, \cdots, f^{n_0}_{k_{n_0} - 1}\} \oplus \mbox{span} \{f^{n_0}_{k_{n_0}}\} \subseteq E_{1 - \lambda_{n_0}^2}(U_{11}^* U_{11}) \oplus E_{1 - \lambda_{n_0 -1}^2}(U_{11}^* U_{11}).
\]
As $\alpha^{n_0}_{t_0} (\sqrt{1 - \lambda_{n_0}^2})^{k_{n_0} - t_0}\neq 0$, this implies by an appeal to Lemma \ref{lemma: diagonal} that  $f^{n_0}_{k_{n_0}} \in P^{\perp}\mathcal{S}$ and proves the claim.
	
\smallskip

\noindent \textit{Case $2$.} Suppose $n_0 = 0$. Then $x_0 \in E_0(U_{11}U_{11}^*)$, and hence (see the basis preceding \eqref{basis of non-zero})
\[
x_0 = \sum_{t = 1}^{k_0} \alpha^0_t f^0_t + \beta f^1_{k_1},
\]
for some scalars $\beta$ and $\alpha^0_t$. By the definition of $U_{11}$, we have
\[
U_{11}^* x_0 = \begin{cases}
\beta \sqrt{1 - \lambda_1^2} f^2_{k_2} & \mbox{if }k_1 = 1
\\
\beta \sqrt{1 - \lambda_1^2} f^1_{k_1 -1} & \mbox{if } k_1 > 1.
\end{cases}
\]
Therefore, if $\beta \neq 0$, then $U_{11}^*(P^{\perp}\mathcal{S}) \subseteq P^{\perp}\mathcal{S}$ yields that $f^1_{k_1 -1}$ or $f^2_{k_2}$ is in $P^{\perp}\mathcal{S}$ according as $k_1 > 1$ or $k_1 = 1$. Suppose now that $\beta = 0$ and let $t_0 = \max\{t: \alpha^0_t \neq 0 \}$. Then $x_0 = \sum_{t = 1}^{t_0} \alpha^0_t f^0_t$. By the definition of $U$ on $\clf$, it follows that
\[
U^{2(k_0 - t_0+1)}f^0_t = f^0_{t + k_0 - t_0 + 1} \in \mbox{span}\{f^0_1, \cdots, f^0_{k_0}\},
\]
for all $ 1 \leq t < t_0$, and
\[
U^{2(k_0 - t_0+1)} f^0_{t_0} = \lambda_1 f^1_1 + \sqrt{1 - \lambda_1^2} \tilde{f}^1_1.
\]
Consequently, $U^{2(k_0 - t_0+1)} x_0 \in \cls$ as
\begin{align*}
U^{2(k_0 - t_0+1)} x_0 & = \sum_{t = 1}^{t_0 - 1} \alpha_t^0 f^0_{t + k_0 - t_0 + 1} + \alpha^0_{t_0}(\lambda_1 f^1_1 + \sqrt{1 - \lambda_1^2} \tilde{f}^1_1) \\
& = (\sum_{t = 1}^{t_0 - 1} \alpha_t^0 f^0_{t + k_0 - t_0 + 1} + \alpha^0_{t_0}\lambda_1 f^1_1) + \alpha^0_{t_0}\sqrt{1 - \lambda_1^2} \tilde{f}^1_1.
\end{align*}
As $\mathcal{S}$ is invariant under $P$, it follows that
\[
P U^{2(k_0 - t_0+1)}(x_0) = \alpha^0_{t_0}\sqrt{1 - \lambda_1^2} \tilde{f}^1_1 \in P\mathcal{S},
\]
that is, $\tilde{f}^1_1 \in P \mathcal{S}$. By the definition of $U$ on $\clf$, we have in  particular that
\[
U \tilde{f}^1_1 = \begin{cases}
\lambda_2 f^2_1 + \sqrt{1 - \lambda_2^2}\tilde{f}^2_1 & \mbox{if }k_1 = 1
\\
\lambda_1 f^1_2 + \sqrt{1 - \lambda_1^2} \tilde{f}^1_2 & \mbox{if } k_1 > 1.
\end{cases}
\]
Therefore, we have that either $f^1_2$ or $f^2_1$ in $P^{\perp}\mathcal{S}$. We conclude that, in either case, $f^n_t \in P^{\perp}\mathcal{S}$ for some $n \geq 1$ and $1 \leq t \leq k_n$. This completes the proof of the claim.

\smallskip
	
\noindent Therefore, we can fix $f^m_t \in P^{\perp}\mathcal{S}$ for some $1 \leq t \leq k_m$ and $m \geq 1$. Since $\clf \in B_{\ran P^\perp}$, the definition of $U$ on $\clf$ implies that there exists a non-zero scalar $c$ such that
\begin{align*}
\Big(U({P^{\perp}}U)^{\sum_{i=1}^{m} k_i -t}\Big) f^m_t = c \tilde{f}^0_1,
\end{align*}
and hence, $\tilde{f}^0_1 \in \mathcal{S}$. Since $\mathcal{S}$ is invariant under $U$, applying $U$ repeatedly on $\tilde{f}^0_1$ we see that
\[
\{f^0_t : 1 \leq t \leq k_0\}  \cup  \{\tilde{f}^0_t : 1 \leq t \leq k_0 +1\} \subseteq \mathcal{S}.
\]
Similarly, since $\tilde \clf \in B_{\ran P}$, by a repeated application of the definition of $U$ on $\tilde \clf$ implies
\[
(PU)^t \tilde{f}^0_{k_0 +1} = \mbox{a non-zero scalar multiple of } \tilde{f}^1_t,
\]
for all $1 \leq t \leq k_1$, and
\[
\Big((PU)^{\sum_{i=1}^{n-1} k_i + t}\Big) \tilde{f}^0_{k_0 +1} = \mbox{a non-zero scalar multiple of} \ \tilde{f}^n_t,
\]
for all $1 \leq t \leq k_n$ and $n \geq 1$. Combining the last three observations, we deduce that
\[
\{f^0_t : 1 \leq t \leq k_0\}  \cup  \tilde \clf \subseteq \mathcal{S}.
\]
At this point, we note that it is enough to prove that
\[
\cup_{n \in \mathbb{N}} \{f^n_t : 1 \leq t \leq k_n\} \subseteq \mathcal{S}.
\]
as that would imply that $\mathcal{S}$ contains the orthonormal basis $\clf \cup \tilde \clf \in B_{\cle}$ and completes the proof of the fact that $\cls = \cle$. To this end, again using the definition of $U$ on $\tilde \clf$, for each $1 \leq t \leq k_n$ and $n \geq 1$, we find
\[
f^n_t = \begin{cases}
\frac{1}{\lambda_1} (P^{\perp}U)\tilde{f}^0_{k_0 + 1} & \text{ if } n = t = 1
\\
\frac{1}{\lambda_n} (P^{\perp}U)\tilde{f}^n_t & \text{ if }  1 < t \leq k_n
\\
\frac{1}{\lambda_n} (P^{\perp}U)\tilde{f}^{n-1}_{k_{n-1}} & \text{ if }  t = 1 \text{ and }  n > 1.
\end{cases}		
\]
Since $\mathcal{S}$ reduces $(U, P)$, we finally conclude that $\cup_{n \in \mathbb{N}} \{f^n_t : 1 \leq t \leq k_n\} \subseteq \mathcal{S}$. The proof of the case when $1$ is not an eigenvalue of $T$ (that is, $k_0 = 0$ case) works exactly along the same lines.
\end{proof}

\section{Concluding remarks}\label{section-conclusion}

In summary, the main results of this paper gives a complete answer (sometimes in the affirmative and sometimes in the negative) to Question \ref{Q1} except for the case of infinite-dimensional Hilbert spaces $\cle$ for which
\[
| \mbox{dim} E_1(T) - \mbox{dim} E_{-1}(T) | \geq 2.
\]
In addition, Theorem \ref{diff1} points out a crucial difference between the finite and infinite-dimensional cases: If $T \in \clb(\cle)$ is a distinguished diagonal operator, then the equality $\mbox{dim}\, E_1(T) = \mbox{dim}\, E_{-1}(T)$ is a necessary condition for the existence of an irreducible BCL pair $(V_1, V_2)$ on $H^2_{\cle}(\mathbb{D})$ such that $C(V_1, V_2)|_{\cle} = T$, only when $\mathcal{E}$ is finite-dimensional.

Now we return to the original question of He, Qin, and Yang \cite[page 18]{HQY2015}. As pointed out in the paragraph following Question \ref{Q1}, all the affirmative answers in this paper also yield an affirmative answers to the question of He, Qin, and Yang. More specifically, suppose $T \in \clb(\cle)$ is a distinguished diagonal operator. If $\cle$ is finite-dimensional and
\[
\mbox{dim} E_1(T) = \mbox{dim} E_{-1}(T),
\]
then there exists an irreducible BCL pair $(M_{\Phi_1}, M_{\Phi_2})$ on $H^2_{\cle}(\mathbb{D})$ such that
\[
C(M_{\Phi_1}, M_{\Phi_2})|_{(\ker C(M_{\Phi_1}, M_{\Phi_2}))^\perp} = T,
\]
whenever at least one of the following three hypotheses hold:
\begin{enumerate}
\item $T$ has at least two distinct positive eigenvalues.
\item $T$ has only one positive eigenvalue in $(0, 1)$.
\item $1$ is the only positive eigenvalue of $T$ with $\mbox{dim}E_1(T)= 1$.
\end{enumerate}
If $\cle$ is infinite-dimensional, then the same conclusion holds whenever
\[
| \mbox{dim} E_1(T) - \mbox{dim} E_{-1}(T) | \leq 1.
\]

Finally, we remark that the general questions considered in this paper are those which are fairly routine in the theory of single isometries but appear to be somewhat challenging in the theory of pairs of commuting isometries. Moreover, the complication involved in the range of our answers seems to further indicate the intricate structure of pairs of commuting isometries and shift invariant subspaces of the Hardy space over the bidisc \cite{Yang-S}.

\vspace{0.2in}

\noindent\textbf{Acknowledgement:} We are grateful to the referee for a careful reading of the manuscript. The first author is grateful to the Indian Statistical Institute Bangalore for offering a visiting position. The research of the first named author is supported in part by NBHM (National Board of Higher Mathematics, India) Post Doctoral fellowship File No. 0204/21/2018/R\&D-II/3030. The second named author is supported in part by the Mathematical Research Impact Centric Support, MATRICS (MTR/2017/000522), and Core Research Grant (CRG/2019/000908), by SERB (DST), Government of India. The research of the third named author is supported by NBHM (National Board of Higher Mathematics, India) Post Doctoral fellowship File No. 0204/26/2019/R\&D-II/12037.

\end{document}